\documentclass[12pt]{amsart}
\usepackage{amsmath, amssymb, amsthm, latexsym}
\usepackage{amssymb,amscd,amsmath}
\usepackage{latexsym}
\usepackage[center]{caption}
\usepackage{tikz}
\newcounter{braid}
\newcounter{strands}

\DeclareMathAlphabet{\bsf}{OT1}{cmss}{bx}{n}

\pgfkeyssetvalue{/tikz/braid height}{1cm}
\pgfkeyssetvalue{/tikz/braid width}{1cm}
\pgfkeyssetvalue{/tikz/braid start}{(0,0)}
\pgfkeyssetvalue{/tikz/braid colour}{black}
\pgfkeys{/tikz/strands/.code={\setcounter{strands}{#1}}}

\makeatletter
\def\cross{%
  \@ifnextchar^{\message{Got sup}\cross@sup}{\cross@sub}}

\def\cross@sup^#1_#2{\render@cross{#2}{#1}}

\def\cross@sub_#1{\@ifnextchar^{\cross@@sub{#1}}{\render@cross{#1}{1}}}

\def\cross@@sub#1^#2{\render@cross{#1}{#2}}

\def\render@cross#1#2{
  \def\strand{#1}
  \def\crossing{#2}
  \pgfmathsetmacro{\cross@y}{-\value{braid}*\braid@h}
  \pgfmathtruncatemacro{\nextstrand}{#1+1}
  \foreach \thread in {1,...,\value{strands}}
  {
    \pgfmathsetmacro{\strand@x}{\thread * \braid@w}
    \ifnum\thread=\strand
    \pgfmathsetmacro{\over@x}{\strand * \braid@w + .5*(1 - \crossing) * \braid@w}
    \pgfmathsetmacro{\under@x}{\strand * \braid@w + .5*(1 + \crossing) * \braid@w}
    \draw[braid] \pgfkeysvalueof{/tikz/braid start} +(\under@x pt,\cross@y pt) to[out=-90,in=90] +(\over@x pt,\cross@y pt -\braid@h);
    \draw[braid] \pgfkeysvalueof{/tikz/braid start} +(\over@x pt,\cross@y pt) to[out=-90,in=90] +(\under@x pt,\cross@y pt -\braid@h);
    \else
    \ifnum\thread=\nextstrand
    \else
     \draw[braid] \pgfkeysvalueof{/tikz/braid start} ++(\strand@x pt,\cross@y pt) -- ++(0,-\braid@h);
    \fi
   \fi
  }
  \stepcounter{braid}
}

\tikzset{braid/.style={double=\pgfkeysvalueof{/tikz/braid colour},double distance=1pt,line width=2pt,white}}

\newcommand{\braid}[2][]{%
  \begingroup
  \pgfkeys{/tikz/strands=2}
  \tikzset{#1}
  \pgfkeysgetvalue{/tikz/braid width}{\braid@w}
  \pgfkeysgetvalue{/tikz/braid height}{\braid@h}
  \setcounter{braid}{0}
  \let\sigma=\cross
  #2
  \endgroup
}
\makeatother

\input xypic
\newtheorem{theorem}{Theorem}
\newtheorem{proposition}[theorem]{Proposition}

\newtheorem{lemma}[theorem]{Lemma}

\newtheorem{corollary}[theorem]{Corollary}

\makeatletter
\makeatother

\def\Z{\mathbb{Z}}

\def\Pi{\mathbb{P}^{\infty}}

\def\qed{\hfill$\square$\medskip}

\def\Zpk{\mathbb{Z}/p^{k}}
\def\Zpk1{\mathbb{Z}/p^{k-1}}
\def\k{\kappa}

\newcommand{\rref}[1]{(\ref{#1})}

\newcommand{\beg}[2]{\begin{equation}\label{#1}#2\end{equation}}
\def\r{\rightarrow}

\def\sl2{\widetilde{SL_{2}(\Z)}}

\title[$RO(G)$-graded ordinary cohomology]{On $RO(G)$-graded equivariant ``ordinary" cohomology where
$G$ is a power of $\Z/2$}
\author{John Holler and Igor Kriz}


\begin{document}

\begin{abstract}
We compute the complete $RO(G)$-graded coefficients of ``ordinary" cohomology with coefficients in $\Z/2$ for $G=(\Z/2)^n$.
\end{abstract}

\maketitle

\section{Introduction}

The notion of a cohomology theory graded by elements of the real representation ring ($RO(G)$-graded cohomology) is a
key concept of equivariant stable homotopy theory of a finite or compact Lie group $G$. Like much of stable homotopy 
theory, perhaps one of the first known example was K-theory. Atiyah and Singer \cite{as} introduced equivariant K-theory of a compact Lie group $G$ and proved that it is naturally $RO(G)$-graded. In fact, Bott periodicity identifies 
many
of the ``dimensions" in $RO(G)$, and relates others to "twistings" (see Karoubi \cite{kar} and, for a more recent
treatment, Freed, Hopkins and Teleman \cite{fht}). Pioneered by Adams and Greenlees \cite{green}, the general $RO(G)$-graded stable homotopy theory theory found firm foundations in the fundamental book of Lewis, May and Steinberger 
\cite{lms}. 

\vspace{3mm}

Despite the clear importance of the concept, beyond K-theory, calculations of $RO(G)$-graded cohomology are few and far in between. Perhaps the most striking case is ``ordinary" $RO(G)$-graded cohomology. Bredon \cite{bredon}
discovered $\Z$-graded $G$-equivariant cohomology associated with a {\em coefficient system} which is ``ordinary" in the sense that the cohomology of a point is concentrated in a single dimension. It was later discovered (\cite{lmm})
that such a theory becomes $RO(G)$-graded when the coefficient system enjoys the structure of a 
{\em Mackey functor} \cite{dress}, which means that it allows building in an appropriate concept of {\em transfer}.
Strikingly, the $RO(G)$-graded coefficients were not known in any single non-trivial case.

\vspace{3mm}

Complete calculations of $RO(\Z/2)$-graded coefficients, however, are important in Real-oriented stable homotopy theory,
because they exhibit the analogy with the complex-oriented case. Real orientation was, once again, 
discovered first by Atiyah
in the case of $K$-theory \cite{atiyah}, and was subsequently extended to cobordism by Landweber \cite{land}.
$RO(\Z/2)$-graded cohomology with coefficients in the Burnside ring Mackey functor was calculated by Stong \cite{sgl}.
A systematic pursuit of real-oriented homotopy theory was started by Araki \cite{araki}, and developed further
by Hu and Kriz \cite{hk} with many calculations, including a complete calculation of the $RO(G)$-graded coefficients
of Landweber's Real cobordism spectrum. In the process, \cite{hk} also calculated the $RO(\Z/2)$-graded 
ordinary cohomology of the ``constant" Mackey functors $\Z$ and $\Z/2$ (i.e. the Mackey functors uniquely 
extending the constant coefficient systems). A major development was the work of Hill, Hopkins and Ravenel
\cite{hhr}, who partially extended the calculations of \cite{hk} to $\Z/(2^k)$ (with special interest in $k=3$),
and applied this to solving the Kervaire-Milnor problem by showing the non-existence of manifolds of Kervaire invariant
$1$ in dimensions $>126$. A still more complete calculation of $RO(G)$-graded ordinary equivariant cohomology
of the constant Mackey functors for $G=\Z/(2^k)$ was more recently given in \cite{hkem}.

\vspace{3mm}

Still, no calculations of $RO(G)$-graded cohomology beyond K-theory were known for groups other than
where $G$ is a primary cyclic group. In a spin-off \cite{hkherm} of their joint solution with Ormsby
\cite{hko} of Thomason's homotopy limit problem for Hermitian K-theory, Hu and Kriz 
computed the $RO(G)$-graded coefficients of topological Hermitian cobordism, which has $G=\Z/2\times \Z/2$.
However, this is a rather special case, where many periodicities occur.

\vspace{3mm}

The purpose of the present paper is to calculate the $RO(G)$-graded coefficients of the ordinary equivariant
cohomology of the ``constant" $\Z/2$ Mackey functor for $G=(\Z/2)^n$. There are several reasons to focus on this
case. The group $(\Z/2)^n$ has an exceptionally simply described real representation ring, thus eliminating the need 
to handle representation-theoretical exceptions such as distinguishing between real and complex (let alone, quaternionic)
representations. The coefficients $\Z/2$ are more convenient than $\Z$, since they eliminate the need to
consider extensions. Despite all this, the complete answer is complicated, however, and in general, we are only able
to present it in the form of the cohomology of an $n$-stage chain complex.

\vspace{3mm}

Our method is based on {\em isotropy separation}, a term coined by Greenlees and May \cite{gmsur},
to mean considering separately the contributions of subgroups of $G$. An isotropy separation spectral sequence
was developed in \cite{abk}, but we use a different spectral sequence here. The reason is that in \cite{abk},
we are not concerned with $RO(G)$-graded coefficients, but rather with computing the complete $\Z$-graded coefficients 
of equivariant complex cobordism of a finite abelian group $G$ as a ring. Based on generalizing the method
of \cite{kriz} in the case of $G=\Z/p$, in the case of $\Z$-graded equivariant complex cobordism, 
one can set up a spectral sequence of rings which collapses to $E^2$ in a single filtration degree. This
means that the complete ring structure can be recovered, which is a special property of complex cobordism.
It is worth mentioning that the spectral sequence of \cite{abk} contains many ``completed'' (i.e., for example, uncountable) terms.

\vspace{3mm}

The case of ordinary $RO(G)$-graded equivariant cohomology is quite different, however, in that 
the spectral sequence fails to collapse to a single degree. Even for $G=\Z/p$, we observe that a part of the
coefficients are in filtration degree $0$ and a part in filtration degree $-1$ (graded homologically). This caused us
to give up, at least for now, calculating the complete ring structure, and use a spectral sequence which is
more amenable to calculations instead.

\vspace{3mm}

Another key ingredient in our computation is the concept of {\em geometric fixed points} of an $RO(G)$-graded
equivariant cohomology theory. This concept was introduced (using a different terminology)
by tom Dieck \cite{td}, who calculated the geometric 
fixed points of equivariant complex cobordism. As far as we know, the term geometric fixed points was coined
by Greenlees and May, and is recorded in \cite{lms}.
Unlike actual fixed points, the geometric fixed point coefficients are {\em periodic} with respect to all
non-trivial irreducible real representations of $G$. Thus, instead of $RO(G)$-graded, the geometric fixed points
are, again, only $\Z$-graded. This is a big advantage in expressing the answer. Note that the ring 
$RO((\Z/2)^n)$ is huge: it is the free abelian group on $2^n$ generators! On the downside, 
the term ``geometric" fails to carry the expected implications in the case of ordinary equivariant cohomology:
we know of no geometry that would help calculating them. Still, in the case $G=(\Z/2)^n$, a complete calculation
of the geometric fixed point ring of $H\Z/2$ is possible using spectral sequence methods. This is our Theorem
\ref{t1}.

\vspace{3mm}

The main method of this paper is, basically, setting up another spectral sequence which enables the calculation
of the coefficients of $H\Z/2_{(\Z/2)^n}$ by investigating how they differ from the coefficients of the geometric
fixed points. There results a spectral sequence, which, in a fairly substantial range of
$RO(G)$-graded dimensions, collapses to $E^2$ in degree $0$. More precisely, the range is, graded homologically,
suspensions by elements of $RO(G)$ where summands of non-trivial irreducible representations
occur with {\em non-positive} coefficients. Alternately, graded cohomologically, this is the range of
suspensions by actual representations, possibly minus a trivial representation. (As it turns out, however, 
in this case, when the trivial representation has a negative coefficient, the cohomology group is $0$.)
In this case, we can both recover the complete ring structure, since the ring embeds into the ring of
geometric fixed points tensored with $RO(G)$. We also have a nice concise formula for the Poincare series
in this case (Theorem \ref{t2}).

\vspace{3mm}

In the case of completely general $RO(G)$-dimension with $G=(\Z/2)^n$, 
we are only able to give a spectral sequence in $n$
filtration degrees, which collapses to $E^2$ and calculates the $RO(G)$-graded coefficient group of
$H\Z/2_G$. Thus, this gives an algebraically defined chain complex whose homology are the desired groups
(Theorem \ref{t3}). We give an example of a complete calculation of the Poincare series of the $RO(G)$-graded
coefficients of $H\Z/2_{\Z/2\times\Z/2}$ (the case $n=2$), which clearly shows that the answer gets 
complicated, and additional complications arise for $n\geq 3$.

\vspace{3mm}

The present paper is organized as follows: In Section \ref{s2}, we introduce the necessary conventions and
notation. In Section \ref{s3}, we compute the geometric fixed points. In Section \ref{s4}, we compute the
coefficients in dimensions involving elements of $RO(G)$ where non-trivial irreducible representations have
non-positive coefficients (graded homologically). In Section \ref{s5}, we calculate the chain complex computing the
complete $RO(G)$-graded coefficients of $H\Z/2_G$ for $G=(\Z/2)^n$. In Section \ref{s6}, we treat the
example of $n=2$. The authors apologize to the readers for not stating their theorems in the Introduction.
Even in the prettiest cases, the theorems involve quite a lot of notation and technical prerequisites. We prefer
to state them properly in the text.

\vspace{3mm}
\noindent
{\bf Recent developments: Odd primes, and hyperplane arrangements.} 
While this paper was under review, several developments took place. A generalization
of the present result to $(\Z/p)^n$ for $p$ an odd prime was found by Holler. The authors also
found out that the ring described in Theorem 
\ref{t1} is a previously known object in algebraic geometry, related to a certain compactification of complements of
hyperplane arrangements referred to as {\em the reciprocal plane}. 

More concretely, for a set 
$S=\{z_\alpha\}$ of equations
of hyperplanes through $0$ in an affine space $Spec(F[u_1,\dots,u_n])$ of a field $F$, one considers
the subring $R_S$ of 
\beg{efff1}{(\prod_{\alpha\in S} z_\alpha)^{-1}F[u_1,\dots,u_n]}
generated by the elements $z_\alpha^{-1}$ (which correspond to our elements $x_\alpha$).
The ring was first described by 
Terao \cite{terao}, and a particularly nice presentation was found by Proudfoot and Speyer \cite{ps}. In the
case of an odd prime $p$, one deals analogously with the subring $\Xi_S$ of 
\beg{efff2}{(\prod_{\alpha\in S} z_\alpha)^{-1}F[u_1,\dots,u_n]\otimes_F \Lambda(du_1,\dots,du_n)}
generated by $z_\alpha^{-1}$ and $d\log(z_\alpha)$, which are topologically in dimensions $2$ and $1$,
respectively. The
analogues of the constructions of \cite{ps, terao} in this graded-commutative case, 
and the reciprocal plane compactification, were recently
worked out by S. Kriz \cite{sk}. 

Our emphasis is quite different form the authors of \cite{ps, terao}, who, doing classical algebraic geometry, were
mostly interested in characteristic $0$. Their
arguments, however, work in general. The ring described in Theorem \ref{t1} (and its
$\Z/p$ analogue discovered by Holler, i.e. the geometric fixed point ring of
$H\Z/p_{G}$ where $G=(\Z/p)^n$) is related to the hyperplane arrangement
of {\em all} hyperplanes through $0$ in the the $n$-dimensional affine space over $\Z/p$. It follows, however, from
the description of \cite{ps, terao, sk} that for a subset $S^\prime$ of a hyperplane arrangement $S$,
the ring $R_{S^\prime}$ (resp. $\Xi_{S^\prime}$) is a subring of $R_{S}$ (resp. $\Xi_{S}$).
It follows in turn that for {\em every} hyperplane arrangement in $G=(\Z/p)^n$, the
$\Z$-graded part of the coefficient ring of the spectrum
$$\bigwedge_{\alpha\in S} S^{\infty\alpha}\wedge H\Z/p_G$$
is $R_S$ for $p=2$, and $\Xi_S$ for and odd prime $p$.

\vspace{5mm}

\section{Conventions and notation}\label{s2}
Throughought this paper, let $G=(\Z/2)^n$. Then the real representation ring of $G$ is canonically identified as
$$RO(G)=\Z[G^*]$$
where $G^*=Hom(G,\Z/2)$. Recall \cite{lms} that for $H\subseteq G$, we have the family $\mathcal{F}[H]$ consisting of all subgroups $K\subset G$ with $H\nsubseteq K$. (In the case of $H=G$, $\mathcal{F}[G]$ is simply the
family $\mathcal{P}$ of proper subgroups of $G$.)
Recall further that for any family $\mathcal{F}$ (a set of subgroups of $G$ closed under subconjugation, which is the same as closed under subgroups, as $G$ is commutative), we have a cofibration sequence
$$E\mathcal{F}_+\r S^0 \r \widetilde{E\mathcal{F}}$$
where $E\mathcal{F}$ is a $G$-CW-complex whose $K$-fixed point set is contractible when $K\in\mathcal{F}$
and empty otherwise. For our choice of $G$, we may then choose a model
\beg{egeom+}{\widetilde{E\mathcal{F}[H]}=\bigwedge_{\alpha\in G^*, \;\alpha|H\neq 0}S^{\infty\alpha}.
}
Here $S^{\infty\alpha}$ is the direct limit of $S^{n\alpha}$ with respect to the inclusions
\beg{egeom*}{S^0\r S^\alpha
}
given by sending the non-base point to $0$. The other construction we use is the family $\mathcal{F}(H)$ 
of all subgroups of a subgroup $H\subseteq G$. We will write simply
$$EG/H=E\mathcal{F}(H).$$
The cardinality of a finite set $S$ will be denoted by $|S|$. We will also adopt a convention from 
\cite{hk} where, for an $RO(G)$-graded spectrum $E$, $E_*$ denotes the $\Z$-indexed coefficients (=homotopy groups)
of $E$, while the $RO(G)$-indexed coefficients will be denoted by $E_\star$.
As is customary, we will also denote by $S(V)$ the unit sphere of a representation $V$, while by $S^V$ we denote the
$1$-point compactification of $V$. The $RO(G)$-graded dimension of a homogeneous element $x\in E_\star$ will be
denoted by $|x|$.

\vspace{3mm}

\section{The geometric fixed points}\label{s3}

In this section, we compute the coefficients of the geometric fixed point spectrum $\Phi^GH\Z/2$. We have
\beg{egeom1+}{\Phi^GH\Z/2=(\widetilde{E\mathcal{F}[G]}\wedge H\Z/2)^G.}
By \rref{egeom+}, suspension of $H\Z/2$
by any non-trivial irreducible real representation of $G$ gives an isomorphism on coefficients, so the coefficients $(\Phi^G\Sigma^?H\Z/2)_*$ are only $\Z$-graded, not $RO(G)$-graded. More specifically,
we have a cofibration sequence
\beg{egeom++}{EG/Ker(\alpha)_+\r S^0\r S^{\infty\alpha},}
so smashing over all non-trivial $1$-dimensional representations $\alpha$, using \rref{egeom+}, we may represent 
$$\widetilde{E\mathcal{F}[G]}\wedge H\Z/2$$
as the iterated cofiber of a $2^n-1$-dimensional cube of the form
\beg{egeom1}{H\Z/2\wedge\bigwedge_{0\neq \alpha\in G^*}(EG/Ker(\alpha)_+\r S^0).
}
Taking coefficients in \rref{egeom1} then gives a spectral sequence converging to $\Phi^GH\Z/2_*$. Now also note that
\beg{egeom2}{EG/H_1\times\dots \times EG/H_k\simeq EG/(H_1\cap\dots\cap H_k).
}
From this, we can calculate the spectral sequence associated with the iterated cofiber of the cube \rref{egeom1}.
Let us grade the spectral sequence homologically, so the term $H\Z/2_*=\Z/2$ is in $E^{1}_{0,0}$. 
The rest of the $E^1$-term is then given as
\beg{egeom2a}{E^{1}_{p,*}=\bigoplus_{S\in \mathcal{S}_p} Sym_{\Z/2}((G/\cap\{Ker
(\alpha)\mid \alpha\in S\})^*)\cdot y_S
}
where $\mathcal{S}_p$ is the set of all subsets of $G^*\smallsetminus\{0\}$ 
of cardinality $p$. (The last factor $y_S$ of \rref{egeom2a}
is only a generator written to distinguish the summands.) Now the $E^2$-term can also be calculated using the 
following

\begin{lemma}
\label{l1}
Consider the differential $\partial$ on
$$Q_n=\Z/2\{y_S\mid S\subseteq (\Z/2)^n\smallsetminus \{0\}\}$$
given by
\beg{epartial}{\partial(y_S)=\sum_{s\in S,\;\langle {S\smallsetminus\{s\}}\rangle=\langle S\rangle} 
y_{S\smallsetminus \{s\}}.}
Then the homology is the $\Z/2$-vector space (freely) generated by a set $F_n$ described inductively as follows:
$$F_1=\{y_\emptyset, y_{\{(1)\}}\},$$
$$F_n=F_{n-1}\cup \{y_{S\cup \{x\}}\mid S\in F_{n-1}, x\in (\Z/2)^{n-1}\times\{1\}\}.$$
In other words, $F_n$ consists of the basis elements $y_S$
where $S$ are all the $\Z/2$-linearly independent (in $G^*$) subsets in (not necessarily reduced) 
row echelon form with respect to reversed order of columns (so the first pivot is in the last possible column etc.).
\end{lemma}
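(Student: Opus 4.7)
The plan is to induct on $n$. The base $n=1$ is immediate: both $y_\emptyset$ and $y_{\{1\}}$ are cycles (removing the unique element of $\{1\}$ strictly drops the span), and $F_1$ consists exactly of these. For the inductive step, set $V=(\Z/2)^n$, $V'=(\Z/2)^{n-1}\times\{0\}$, $V_1=V\smallsetminus V'$. The subspace $Q^{\text{low}}\subseteq Q_n$ spanned by $y_S$ with $S\subseteq V'\smallsetminus\{0\}$ is a subcomplex (since $\partial$ only removes elements of $S$), and $Q^{\text{low}}\cong Q_{n-1}$. Write $Q^{\text{high}}:=Q_n/Q^{\text{low}}$; I will compute $H_*(Q^{\text{high}})$ and then read off $H_*(Q_n)$ from the associated long exact sequence, checking that the connecting map is zero.

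To compute $H_*(Q^{\text{high}})$, filter it by $F^i:=\{y_S:|S\cap V_1|\leq i\}$, a filtration by subcomplexes since $\partial$ never raises $|S\cap V_1|$. The technical heart is to prove $F^i/F^{i-1}$ is acyclic for all $i\geq 2$, which implies $F^1\hookrightarrow Q^{\text{high}}$ is a quasi-isomorphism. In $F^i/F^{i-1}$ the induced differential only removes elements of $V'$, and the complex splits over choices of $S_1:=S\cap V_1$ with $|S_1|=i$. For each such $S_1$, pick distinct $x_1,x_2\in S_1$ and set $v_0:=x_1+x_2\in V'\smallsetminus\{0\}$; crucially $v_0\in\langle S_1\rangle$. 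Define $h(y_S):=y_{S\cup\{v_0\}}$ if $v_0\notin S$ and $h(y_S):=0$ otherwise. A case analysis splitting on whether $v_0\in S_0:=S\cap V'$ shows $\partial h+h\partial=\mathrm{id}$: the containment $v_0\in\langle S_1\rangle$ makes the span-preservation condition for removing $v_0$ automatic, and also forces $\langle(S_0\cup\{v_0\})\smallsetminus\{s\},S_1\rangle=\langle S_0\smallsetminus\{s\},S_1\rangle$, so the remaining terms of $\partial h$ and $h\partial$ pair up and cancel mod~$2$.

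Next, $F^1$ has basis $y_{S'\cup\{x\}}$ with $S'\subseteq V'\smallsetminus\{0\}$ and $x\in V_1$; since $x\notin V'\supseteq\langle S'\rangle$, removing $x$ is never valid, so the $F^1$-differential depends only on $S'$ and matches the $Q_{n-1}$-differential on copies indexed by $x\in V_1$ (with a degree shift by one). By the inductive hypothesis, $H_*(F^1)$ has basis $\{y_{S'\cup\{x\}}:S'\in F_{n-1},\;x\in V_1\}$, precisely $F_n\smallsetminus F_{n-1}$. Each such $y_{S'\cup\{x\}}$ is also a cycle in $Q_n$ itself, because $S'\cup\{x\}$ is linearly independent in $G^*$ (so removing any element strictly drops the span and no term of $\partial y_{S'\cup\{x\}}$ survives). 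This shows the connecting homomorphism $H_*(Q^{\text{high}})\to H_{*-1}(Q^{\text{low}})$ vanishes, giving $H_*(Q_n)\cong H_*(Q^{\text{low}})\oplus H_*(Q^{\text{high}})$ with basis $F_{n-1}\sqcup(F_{n-1}\times V_1)=F_n$, as claimed. The main obstacle is the contracting homotopy above: the hypothesis $i\geq 2$ is essential since a second element of $S_1$ is needed to produce $v_0\in V'\smallsetminus\{0\}$, and the mod~$2$ cancellations in $\partial h+h\partial$ must be tracked carefully.
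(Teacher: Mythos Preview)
Your proof is correct, and the approach is genuinely different from the one in the paper. The paper introduces an auxiliary ``full'' differential $d(y_S)=\sum_{s\in S}y_{S\smallsetminus\{s\}}$ on $Q_n$, observes that $(Q_n,d)$ is acyclic, and filters by $\operatorname{rank}\langle S\rangle$ so that $\partial$ is the associated graded differential; it then shows the candidate answer $C=\Z/2\langle F_n\rangle$ is a $d$-subcomplex which is itself acyclic (via a tensor-product description over maximal REF sets), and concludes by a spectral sequence comparison and induction on the rank filtration. By contrast, you induct on $n$ via the short exact sequence $Q^{\mathrm{low}}\hookrightarrow Q_n\twoheadrightarrow Q^{\mathrm{high}}$ and kill the higher pieces of $Q^{\mathrm{high}}$ with an explicit contracting homotopy built from $v_0=x_1+x_2$. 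Your route is more elementary and self-contained (no auxiliary differential, no spectral sequence), and the homotopy makes the acyclicity of the $i\geq 2$ pieces completely transparent; the paper's route is more structural, tying the result to the acyclicity of the full simplicial complex on $G^*\smallsetminus\{0\}$ and to the combinatorics of REF flags, which is closer in spirit to the broken-circuit/Orlik--Solomon viewpoint alluded to in the paper's discussion of hyperplane arrangements. One small point worth making explicit in your write-up: the quasi-isomorphism $F^1\hookrightarrow Q^{\mathrm{high}}$ follows because the filtration is finite (bounded by $|V_1|=2^{n-1}$), so the long exact sequences of the successive pairs $(F^i,F^{i-1})$ compose.
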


\begin{proof}
Consider a differential on $Q_n$ given by
\beg{eddd1}{d(y_S)=\sum_{s\in S} y_{S\smallsetminus \{s\}}.}
Then the homology is $0$ for $n>0$ and $\Z/2$ for $n=0$. Now consider
an increasing filtration on $Q_n$ by making the filtration degree $\gamma(S)$ of a basis element $y_S$ equal to
$rank\langle S\rangle$, the rank of the $\Z/2$-vector space generated by $S$.
Then the $E^1$-term is what we are trying to calculate. 

On the other hand, in the answer $C=\Z/2(F_n)$ suggested in the statement of the Lemma
(which, note, consists of elements of $E^1$), 
the formula for $d^1$ is the same as the formula \rref{eddd1} 
for $d$. We claim that
\beg{epartial1}{H_*(C,d)=0.}
To see this, note that for any fixed non-empty set $S$ in row echelon form, the subcomplex $C_S$ generated by
$y_{S^\prime}$ subsets of $S^\prime\subseteq S$
is just a tensor product of copies of 
\beg{egeom2b}{\diagram \Z/2\rto^\cong & \Z/2,\enddiagram}
and hence satisfies 
$$H_*(C_S,d)=0.$$
On the other hand, $C$ for $n>0$
is a sum of the complexes $C_S$ where $S$ ranges over {\em maximal} linearly independent subsets of $(\Z/2)^n$ in REF (i.e. those which have exactly $n$ elements), while the intersection of any subset of those 
complexes 
$$C_{S_1\cap\dots \cap S_k}=C_{S_1}\cap\dots\cap C_{S_k}$$
has zero homology because
$$(1,0,\dots,0)\in S_1\cap\dots \cap S_k$$
and hence $S_1\cap\dots\cap S_k\neq \emptyset$. This implies \rref{epartial1}. 

Now the statement follows by 
induction on $n$ using comparison theorems for spectral sequences. More concretely, if we denote
by $C^\prime\subset C$ the subcomplex generated by linearly independent subsets $S$ with $|S|<n$, and
$Q^\prime\subset Q_n$ the subcomplex generated by sets $S$ which span a subspace of dimension $<n$, then
the induction hypothesis (given that an intersection of vector subspaces is a vector subspace), shows that
the embedding $C\subset Q_n$ restricts to a quasi-isomorphism 
\beg{eccc}{C^\prime \subset Q^\prime.}
Since the
homologies of both $C$ and $Q_n$ are $0$, we see that the homomorphism on degree $n$ subcomplexes must
induce an isomorphism on homology, thus implying that the degree $n$ part of the $E^1$ term of our spectral
sequence for $Q_n$ is just the degree $n$ part of $C$ (which is, of course, isomorphic to $\Z/2$).
\end{proof}

\vspace{3mm}

Now by Lemma \ref{l1}, the $E^2$-term of the spectral sequence of the cube \rref{egeom1} is
\beg{egeom2aa}{E^2=\bigoplus_{S\in F_n}Sym_{\Z/2}((G/\cap\{Ker
(\alpha)\mid \alpha\in S\})^*)\cdot y_S
}
(where we identify $G^*\cong (\Z/2)^n$).

Now consider, for $0\neq \alpha:G\r\Z/2$, the map 
\beg{egeommap1}{f_\alpha:\Phi^{G/Ker(\alpha)}H\Z/2_*=\Phi^{G/Ker(\alpha)}(H\Z/2)^{Ker(\alpha)}_{*}
\r \Phi^G H\Z/2_*.
}
It is fairly obvious that for $n=1$ the spectral sequence associated with the ($1$-dimensional) cube \rref{egeom1}
collapses to $E^1$ and that in fact
\beg{egeommap2}{\Phi^{G/Ker\alpha}H\Z/2_*=\Z/2[x_\alpha]
}
where in the spectral sequence, the element $x_\alpha$ is filtration degree $1$ and is represented by the set
$\{(1)\}$ if we identify $G/Ker(\alpha)\cong \Z/2$. We will also denote the image under \rref{egeommap1}
$$f_\alpha(x_\alpha)\in \Phi^G H\Z/2$$
by $x_\alpha$.

\vspace{3mm}

\begin{theorem}
\label{t1}
We have
\beg{ephi1}{\begin{array}{l}\Phi^{G}_{*}H\Z/2 = \\
\Z/2[x_\alpha\mid \alpha\in G^*\smallsetminus \{0\}]/
(x_\alpha x_\beta+x_\alpha x_\gamma+x_\beta x_\gamma\mid \alpha+\beta+\gamma=0)
\end{array}
}
where the classes $x_\alpha$ are in dimension $1$.
\end{theorem}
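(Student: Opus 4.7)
The plan is to construct a ring homomorphism $\phi$ from the presented algebra on the right-hand side of \rref{ephi1} to $\Phi^G_* H\Z/2$ by sending each $x_\alpha$ to the class $f_\alpha(x_\alpha)$ of \rref{egeommap1}--\rref{egeommap2}, and then to show it is an isomorphism by analyzing the cube spectral sequence \rref{egeom1} whose $E^2$-term was computed in \rref{egeom2aa}.

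To verify the relations, given $\alpha + \beta + \gamma = 0$ in $G^*$ with all three nonzero and distinct, the set $T = \{\alpha, \beta, \gamma\}$ spans the same $2$-dimensional subspace as each of its $2$-element subsets, so by Lemma \ref{l1},
\[
\partial(y_T) = y_{\{\alpha, \beta\}} + y_{\{\alpha, \gamma\}} + y_{\{\beta, \gamma\}}.
\]
Hence $y_{\{\alpha,\beta\}} + y_{\{\alpha,\gamma\}} + y_{\{\beta,\gamma\}}$ vanishes at $E^2$. Since $x_\alpha x_\beta$ is detected by $y_{\{\alpha, \beta\}}$ in the multiplicative spectral sequence (and similarly for the other two products), we obtain $x_\alpha x_\beta + x_\alpha x_\gamma + x_\beta x_\gamma = 0$ modulo lower filtration, and promoting this congruence to an on-the-nose identity requires controlling the possible multiplicative extensions in this bidegree.

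For surjectivity and collapse, each summand $Sym_{\Z/2}((G/\cap Ker)^*) \cdot y_S$ of the $E^2$-term, indexed by $S = \{\alpha_1, \dots, \alpha_k\} \in F_n$, should lie in the image of $\phi$: the class $y_S$ is hit by the product $x_{\alpha_1} \cdots x_{\alpha_k}$, while polynomial multiplication within the summand is achieved by further multiplication by $x_\beta$ with $\beta \in \langle S \rangle$, the resulting non-REF products being rewritten in REF form via the three-term relations. Since $E^2$ is then generated multiplicatively by permanent cycles lying in the image of $\phi$, the spectral sequence collapses at $E^2$, and injectivity follows from a Poincar\'e series comparison between the presented ring (the reciprocal plane coordinate ring in the sense of \cite{terao, ps}) and $E^2$.

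The main obstacle will be making precise the multiplicative structure on the cube spectral sequence, and in particular the detection statement $x_\alpha x_\beta \leftrightarrow y_{\{\alpha,\beta\}}$ together with the absence of multiplicative extensions that could spoil the on-the-nose validity of the relations. The combinatorial reduction to the REF basis $F_n$ should be routine given Lemma \ref{l1}, but must be organized carefully to interface correctly with the polynomial factors in \rref{egeom2aa}.
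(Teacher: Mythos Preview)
Your overall architecture---produce the classes $x_\alpha$, show monomials in them detect the $E^2$ basis \rref{egeom2aa}, conclude collapse and surjectivity, and finish by a dimension count---matches the paper's. The gap is exactly the one you flag but do not close: verifying that $x_\alpha x_\beta + x_\alpha x_\gamma + x_\beta x_\gamma$ vanishes \emph{on the nose}, not merely in the associated graded.

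This is a real obstruction, not a formality. The paper stresses that the cube spectral sequence \rref{egeom2aa} is \emph{not} a spectral sequence of rings, so your ``multiplicative spectral sequence'' detection of $x_\alpha x_\beta$ by $y_{\{\alpha,\beta\}}$ is only available after importing the external pairing from smashing one-dimensional cubes, and it only pins down products modulo lower filtration. In total degree $2$ the lower-filtration classes are precisely $x_\alpha^2, x_\beta^2, x_\gamma^2$, so the vanishing of $y_{\{\alpha,\beta\}}+y_{\{\alpha,\gamma\}}+y_{\{\beta,\gamma\}}$ at $E^2$ is equally compatible with the alternative relation
\[
x_\alpha x_\beta + x_\alpha x_\gamma + x_\beta x_\gamma + x_\alpha^2 + x_\beta^2 + x_\gamma^2 = 0.
\]
Your map $\phi$ is not even well-defined until this ambiguity is resolved, so the Poincar\'e series comparison with the reciprocal plane ring cannot be invoked to settle it (and in any case both candidate rings have the same Poincar\'e series for $n=2$, since the substitution $x_\alpha \mapsto x_\beta + x_\gamma$ etc.\ gives an isomorphism there).

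The paper's fix is not to sharpen the filtration argument but to bypass it: a dimension count forces \emph{some} homogeneous symmetric quadratic relation, symmetry cuts the candidates to the two above, and then the wrong one is eliminated by a concrete computation at $n=3$. Namely, in the ring $R_3'$ presented by the alternative relation, one computes $\sigma_3(x_{\alpha_1},x_{\alpha_2},x_{\alpha_3},x_{\alpha_4})$ for a minimal dependent $4$-set and finds an expression that fails to be symmetric in the $\alpha_i$; permuting and adding yields an extra relation in filtration degree $2$, so $R_3'$ is strictly smaller than the $E^2$-term and cannot be $\Phi^G_* H\Z/2$. You need either this argument or some independent way (e.g.\ a direct geometric identification of $x_\alpha x_\beta$) to break the symmetry between the two candidates.
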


\vspace{3mm}
Before proving the theorem, it is useful to record the following algebraic fact:

\begin{proposition}
\label{p1}
Let $\{\alpha_1,\dots,\alpha_k\}$ be a minimal $\Z/2$-linearly dependent subset of $G^*\smallsetminus \{0\}$, $k\geq 3$.
Then the ring $R_n$ on the right hand side of \rref{ephi1} satisfies
\beg{esymm1}{\sigma_{k-1}(x_{\alpha_1},\dots,x_{\alpha_k})=0.
}
(Here $\sigma_i$ denotes the $i$'th elementary symmetric polynomial.)
\end{proposition}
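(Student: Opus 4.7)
The plan is to proceed by induction on $k$. The base case $k=3$ is immediate: when $\alpha_1+\alpha_2+\alpha_3=0$ with all three nonzero, $\sigma_2(x_{\alpha_1},x_{\alpha_2},x_{\alpha_3}) = x_{\alpha_1}x_{\alpha_2}+x_{\alpha_1}x_{\alpha_3}+x_{\alpha_2}x_{\alpha_3}$ is exactly one of the defining relations of $R_n$.

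For the inductive step $k \geq 4$, I would first exploit minimality to guarantee that the element $\beta := \alpha_1+\alpha_2$ is nonzero (otherwise $\{\alpha_1,\alpha_2\}$ would already be a proper dependent subset) and distinct from each $\alpha_i$ with $i\geq 3$ (otherwise $\{\alpha_1,\alpha_2,\alpha_i\}$ would be a proper three-element dependency, which requires $k\geq 4$ to qualify as proper). Setting $z := x_\beta$, the characteristic-$2$ form of the defining relation then gives $x_{\alpha_1}x_{\alpha_2} = z(x_{\alpha_1}+x_{\alpha_2})$ in $R_n$.

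Next I would establish the purely algebraic identity
\[
\sigma_{k-1}(x_{\alpha_1},\dots,x_{\alpha_k}) = (x_{\alpha_1}+x_{\alpha_2})\,\sigma_{k-2}(z,x_{\alpha_3},\dots,x_{\alpha_k})
\]
by splitting $\sigma_{k-1}$ according to whether the omitted index lies in $\{1,2\}$ or in $\{3,\dots,k\}$. The first bucket contributes $(x_{\alpha_1}+x_{\alpha_2})\prod_{j=3}^k x_{\alpha_j}$ and the second contributes $x_{\alpha_1}x_{\alpha_2}\cdot\sigma_{k-3}(x_{\alpha_3},\dots,x_{\alpha_k})$. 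Substituting the quadratic relation for $x_{\alpha_1}x_{\alpha_2}$ factors out $(x_{\alpha_1}+x_{\alpha_2})$, and the bracket that remains is exactly $\sigma_{k-2}$ of the $k-1$ variables $z,x_{\alpha_3},\dots,x_{\alpha_k}$ (omit $z$ to get the first summand, omit some $x_{\alpha_i}$ to recover the second).

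Finally I would verify that $\{\alpha_3,\dots,\alpha_k,\beta\}$ is itself a minimally dependent subset of $G^*\smallsetminus\{0\}$ of size $k-1$, so that the induction hypothesis applies to kill the right-hand factor. Dependence is immediate since $\beta+\alpha_3+\dots+\alpha_k=\alpha_1+\dots+\alpha_k=0$. For minimality, any proper dependent subset either excludes $\beta$, yielding a proper dependency among $\alpha_3,\dots,\alpha_k$, or contains $\beta$, in which case substituting $\beta=\alpha_1+\alpha_2$ produces a dependent subset of $\{\alpha_1,\dots,\alpha_k\}$ of size at most $k-1$; both outcomes contradict the minimality of the original set. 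The main obstacle is precisely this bookkeeping required to transfer minimality to the reduced set — which is why the hypothesis $k\geq 4$ is essential in the inductive step and $k=3$ must be handled separately as the base case.
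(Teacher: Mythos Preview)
Your proof is correct and follows essentially the same approach as the paper's: induction on $k$, replacing two of the $\alpha_i$ by their sum $\beta$, using the defining relation to factor $\sigma_{k-1}$ as $(x_{\alpha_1}+x_{\alpha_2})\,\sigma_{k-2}(x_\beta,x_{\alpha_3},\dots,x_{\alpha_k})$, and applying the induction hypothesis to the reduced minimally dependent set. The only difference is cosmetic (you pair $\alpha_1,\alpha_2$ while the paper pairs $\alpha_{k-1},\alpha_k$), and you spell out the verification that $\beta\neq 0$, $\beta\neq\alpha_i$, and the minimality of the new set more carefully than the paper, which simply notes that minimality is equivalent to $\alpha_1,\dots,\alpha_{k-1}$ being independent with $\alpha_1+\dots+\alpha_k=0$.
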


\begin{proof}
We will proceed by induction on $k$. For $k=3$, this is by definition. Suppose $k>3$ and suppose the statement is true
with $k$ replaced by $k-1$. Compute in $R_n$, denoting $\beta=\alpha_{k-1}+\alpha_k$:
\beg{ecomputrn}{\begin{array}{l}
\sigma_{k-1}(x_{\alpha_1},\dots,x_{\alpha_k})=\\
(x_{\alpha_k}+x_{\alpha_{k-1}})(x_{\alpha_1}\cdot\dots\cdot x_{\alpha_{k-2}})+x_{\alpha_k}x_{\alpha_{k-1}}
\sigma_{k-3}(x_{\alpha_1},\dots,x_{\alpha_{k-2}})=\\
(x_{\alpha_k}+x_{\alpha_{k-1}})(x_{\alpha_1}\cdot\dots\cdot x_{\alpha_{k-2}})
+(x_{\alpha_k}+x_{\alpha_{k-1}})x_\beta \sigma_{k-3}(x_{\alpha_1},\dots,x_{\alpha_{k-2}})=\\
(x_{\alpha_k}+x_{\alpha_{k-1}})\sigma_{k-2}(x_\beta,x_{\alpha_1},\dots,x_{\alpha_{k-2}})
\end{array}
}
Now $\{\beta,\alpha_1,\dots,\alpha_{k-2}\}$ is also a minimal linearly dependent set (note that
minimality is equivalent to the statement that $\alpha_1,\dots,\alpha_{k-1}$ are linearly independent
and $\alpha_1+\dots +\alpha_k=0$). Therefore, the right hand side of \rref{ecomputrn} is $0$ in $R_n$
by the induction hypothesis.
\end{proof}

\vspace{3mm}
\noindent
{\em Proof of Theorem \ref{t1}:}
We know that  $\Phi^{G}_{*}H\Z/2 $ is a ring, since $\Phi^G H\Z/2$ is an
$E_\infty$-ring spectrum.
By \rref{egeommap1}, we know that the $x_\alpha$'s represent elements of $\Phi^{G}_{*}H\Z/2 $, and hence polynomials in the $x_\alpha$'s do as well. Now it is important to note that \rref{egeom2aa} is not a spectral sequence of 
rings. However, there are maps arising from smashing $n$ cubes \rref{egeom1} (over $H\Z/2$) for $n=1$, and from this, it
is not difficult to deduce that for $S$ linearly independent, a monomial of the form
\beg{emonom1}{\prod_{s\in S}x_{s}^{r_s}, \; r_s\geq 1
}
is represented in \rref{egeom2aa} by
\beg{emonom2}{S\cdot \prod_{s\in S} x_{s}^{r_s-1}.}
(Note that by Lemma \ref{l1}, for $S$ not linearly independent, \rref{emonom2} does not survive to $E^2$.)
By Lemma \ref{l1}, we know that such elements generate the $E^2$-term as a $\Z/2$-module, so 
we have already proved that the spectral sequence associated with the cube \rref{egeom1} collapses to $E^2$.

Now counting basis elements in filtration degree $2$ 
shows that $\Phi^{G}_{*}H\Z/2 $ must have a quadratic relation
among the elements $x_\alpha$, $x_\beta$, $x_\gamma$ when
$$\alpha+\beta+\gamma=0.$$
(It suffices to consider $n=2$.)
The relation must be symmetric and homogeneous for reasons of dimensions, so the possible candidates 
for the relation are
\beg{erel1}{x_\alpha x_\beta +x_\alpha x_\gamma +x_\beta x_\gamma=0}
or
\beg{erel1alt}{x_\alpha x_\beta +x_\alpha x_\gamma +x_\beta x_\gamma+x_{\alpha}^2+x_{\beta}^{2}
+x_{\gamma}^{2}=0.}
We will prove the theorem by finding a basis of the monomials \rref{emonom1} of the ring on the right hand side
of \rref{ephi1} and matching them,
in the form \rref{emonom2}, with the $E^2$-term \rref{egeom2aa}. 

Before determining which of the relations \rref{erel1}, \rref{erel1alt} is correct, we observe (by induction)
that the ring $R_n$ given by the relation \rref{erel1} satisfies (identifying $G^*\cong (\Z/2)^n$)
\beg{egen1}{R_n=R_{n-1}\otimes \Z/2[x_{(0,\dots,0,1)}] +
\sum_{\alpha\in ((\Z/2)^{n-1}\smallsetminus\{0\})\times\{1\}} R_{n-1}\otimes x_\alpha\cdot\Z/2[x_\alpha]
}
and that the ring $R_{n}^{\prime}$ obtained from the relations \rref{erel1alt} satisfies a completely analogous
statement with $R_i$ replaced by $R_{i}^{\prime}$. By the identification between \rref{emonom1} and
\rref{emonom2}, we see that we obtain a $\Z/2$-module of the same rank as the $E^2$ term of the spectral
sequence of \rref{egeom1} in each dimension if and only if the sum \rref{egen1} for each $n$ is a direct sum
(and similarly for the case of $R_{n}^\prime$). Since we already know that the spectral sequence collapses to $E_2$,
we know that this direct sum must occur for whichever relation \rref{erel1} or \rref{erel1alt} is correct, and also that 
the ``winning" relation \rref{erel1} (resp. \rref{erel1alt}), ranging over all applicable choices of $\alpha$, 
$\beta$ and $\gamma$ generates all the relations in $\Phi^{G}_{*}H\Z/2 $.

We will complete the proof by showing that \rref{erel1alt} generates a spurious relation, and hence is eliminated.
This cannot be done for $n=2$, as we actually have $R_2\cong R_{2}^{\prime}$ via the (non-functorial 
isomorphism) replacing the generators 
$x_\alpha, x_\beta,x_\gamma$ with $x_\alpha+x_\beta$, $x_\alpha+x_\gamma$,
$x_\beta+x_\gamma$.

We therefore must resort to $n=3$. Let $\alpha_1=(1,0,0)$, $\alpha_2=(0,1,0)$, $\alpha_3=(0,0,1)$,
$\alpha_4=(1,1,1)$. Applying the computation \rref{ecomputrn} in the proof of Proposition \ref{p1} to
compute $\sigma_3(x_{\alpha_1},x_{\alpha_2},x_{\alpha_3},x_{\alpha_4})$ in the ring $R_{3}^{\prime}$,
we obtain
$$\begin{array}{l}\sigma_3(x_{\alpha_1},x_{\alpha_2},x_{\alpha_3},x_{\alpha_4})=\\
(x_{\alpha_1}+x_{\alpha_2})(x_{\alpha_3}^{2}+x_{\alpha_4}^{2}+x_{\beta}^2)
+(x_{\alpha_3}+x_{\alpha_4})(x_{\alpha_1}^{2}+x_{\alpha_2}^{2}+x_{\beta}^{2}).
\end{array}$$
As this is clearly not symmetrical in $x_{\alpha_1}$, $x_{\alpha_2}$, $x_{\alpha_3}$, $x_{\alpha_4}$, 
by permuting (say, using a $4$-cycle) and adding both relations, we obtain a spurious relation 
in dimension $3$ and filtration degree $2$, which shows that the analog of \rref{egen1} with $R_i$ replaced by
$R_{i}^{\prime}$ fails to be a direct sum for $n=3$, thereby excluding the relation \rref{erel1alt}, and completing the 
proof.
\qed

\vspace{3mm}

From the fact that \rref{egen1} is a direct sum, we obtain the following

\begin{corollary}
\label{cor1}
The Poincare series of the ring $R_n$ is
$$\frac{1}{(1-x)^n}\prod_{i=1}^{n}(1+(2^{i-1}-1)x).$$
\end{corollary}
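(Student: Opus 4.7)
The plan is to read off a recursion for the Poincar\'e series $P_n(x)$ of $R_n$ directly from the direct sum decomposition \rref{egen1}, which was established as part of the proof of Theorem \ref{t1}. Since each generator $x_\alpha$ has topological dimension $1$, the polynomial ring $\Z/2[x_\alpha]$ contributes a factor of $1/(1-x)$ and the ideal $x_\alpha\cdot\Z/2[x_\alpha]$ contributes $x/(1-x)$.

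First I would set $P_0(x)=1$, corresponding to $R_0=\Z/2$. Then, using that the sum in \rref{egen1} is direct (which is precisely what the proof of Theorem \ref{t1} establishes), the first summand $R_{n-1}\otimes \Z/2[x_{(0,\dots,0,1)}]$ contributes $P_{n-1}(x)/(1-x)$, and each of the $2^{n-1}-1$ summands indexed by $\alpha\in((\Z/2)^{n-1}\smallsetminus\{0\})\times\{1\}$ contributes $P_{n-1}(x)\cdot x/(1-x)$. Adding these up yields the recursion
$$P_n(x)=\frac{1+(2^{n-1}-1)x}{1-x}\,P_{n-1}(x).$$

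Finally, a straightforward induction on $n$ telescopes this recursion into the closed form
$$P_n(x)=\prod_{i=1}^{n}\frac{1+(2^{i-1}-1)x}{1-x}=\frac{1}{(1-x)^n}\prod_{i=1}^{n}\bigl(1+(2^{i-1}-1)x\bigr),$$
which is the stated formula. There is no real obstacle here: the entire content of the corollary is contained in the directness of the sum \rref{egen1}, which has already been verified, so the argument amounts to bookkeeping of generating functions.
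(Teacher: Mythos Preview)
Your proof is correct and follows exactly the approach the paper intends: the corollary is stated as an immediate consequence of the directness of the sum \rref{egen1}, and you have simply written out the resulting recursion and solved it.
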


\qed

\section{The coefficients of $H\Z/2$ suspended by a $G$-representation}\label{s4}

In this section, we will compute explicitly the coefficients of $H\Z/2$ suspended by 
\beg{erefv}{V=\sum_{\alpha\in G^*\smallsetminus \{0\}} m_\alpha \alpha}
with $m_\alpha\geq 0$. 

\vspace{3mm}
\begin{theorem}
\label{t2}
1. For $m_\alpha\geq 0$, $G^*\cong (\Z/2)^n$, recalling \rref{erefv}, the Poincare series of
$${\Sigma}^{V}H\Z/2_*$$
is
\beg{eserr1}{
\frac{1}{(1-x)^n}\left(
\sum_{(\Z/2)^k\cong H\subseteq G^*}
(-1)^k\left(\prod_{i=1}^{n-k}(1+(2^{i-1}-1)x)\right)
x^{\displaystyle k+\sum_{\alpha\in H\smallsetminus \{0\}}m_\alpha}\right).
}
2. For $m_\alpha\geq 0$, the canonical map 
$${\Sigma}^{V}H\Z/2\r\widetilde{E\mathcal{F}[G]}
\wedge H\Z/2
$$
(given by the smash product
of the inclusions $S^{m_\alpha \alpha}\r S^{\infty \alpha}$) induces an injective map on $\Z$-graded
homotopy groups.
\end{theorem}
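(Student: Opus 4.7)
The plan is to use the $N$-dimensional cube of cofibrations ($N = 2^n - 1$) obtained by smashing together, over all $\alpha\in G^*\setminus\{0\}$, the cofibration sequences
\[
(EG/\mathrm{Ker}(\alpha))_+ \wedge S^{m_\alpha\alpha} \longrightarrow S^{m_\alpha\alpha} \longrightarrow S^{\infty\alpha}
\]
(using $S^{m_\alpha\alpha}\wedge S^{\infty\alpha}=S^{\infty\alpha}$), each further smashed with $H\Z/2$. The iterated cofiber of this cube is $\widetilde{E\mathcal{F}[G]}\wedge H\Z/2$, whose $\Z$-graded $G$-equivariant homotopy is $\Phi^G H\Z/2_*$ from Theorem \ref{t1}. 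The cube induces a spectral sequence converging to $\Phi^G H\Z/2_*$; its vertex at $T\subseteq G^*\setminus\{0\}$ is
\[
X_T := S^V\wedge H\Z/2\wedge(EG/H_T)_+, \quad H_T := \bigcap_{\alpha\in G^*\setminus(T\cup\{0\})}\mathrm{Ker}(\alpha),
\]
using equation \rref{egeom2}. The top vertex at $T = G^*\setminus\{0\}$ has $H_T = G$ and equals $S^V\wedge H\Z/2$ itself.

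For $H_T\subsetneq G$, the coefficient groups $\pi_*^G(X_T)$ are computable inductively: $EG/H_T$ is the pullback of the universal free $(G/H_T)$-space, leading to a Borel spectral sequence over $B(G/H_T)$ combined with the mod-$2$ Thom isomorphism applied to $V^{H_T} = \sum_{\alpha\in H_T^\perp\setminus\{0\}}m_\alpha\alpha$. Using Theorem \ref{t1} applied to $H_T\cong(\Z/2)^{n-k}$ (where $k = \mathrm{rank}(G/H_T)$), the resulting Poincar\'e series is $\prod_{i=1}^{n-k}(1+(2^{i-1}-1)x)\cdot x^{|V^{H_T}|}/(1-x)^n$. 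Grouping these vertex contributions by the subgroup $H := H_T^\perp\subseteq G^*$ they determine and performing a M\"obius inclusion-exclusion over subsets $T$ yielding each $H$ (weighted by the filtration sign $(-1)^{|T|}$), the alternating sum reduces to the expression in formula \rref{eserr1}, with the factor $(-1)^k x^k$ in each summand of rank $k$ emerging from this combinatorial count.

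The main obstacle, equivalent to part 2, is establishing that the spectral sequence collapses with contributions in the expected filtration. For $V$ with all $m_\alpha\geq 0$ (our hypothesis), the spectral sequence collapses at $E^2$ in filtration degree $0$ as remarked in the introduction. This collapse is verified by comparison with the multiplicative structure of $\Phi^G H\Z/2_*$ from Theorem \ref{t1} together with the observation that the alternating sum in \rref{eserr1} evaluates to a power series with non-negative integer coefficients, precluding unexpected cancellations from higher differentials. Once collapse is established, part 2 follows: the edge homomorphism $\pi_*^G(S^V\wedge H\Z/2)\to\Phi^G H\Z/2_*$ is a monomorphism since $\pi_*^G(S^V\wedge H\Z/2)$ is realized as the ``top'' filtration in the collapsed $E^\infty$-page, and the Poincar\'e series formula of part 1 follows immediately.
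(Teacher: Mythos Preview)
Your cube approach is natural, but there is a concrete error in the vertex computation and the collapse argument does not stand on its own.

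\textbf{The vertex terms are not what you claim.} For $H_T\subsetneq G$ you assert that $\pi_*^G(X_T)$ has Poincar\'e series $\prod_{i=1}^{n-k}(1+(2^{i-1}-1)x)\cdot x^{|V^{H_T}|}/(1-x)^n$, citing Theorem~\ref{t1} for $H_T$. But Theorem~\ref{t1} computes the \emph{geometric} fixed points $\Phi^{H_T}H\Z/2_*$, whereas $\pi_*^G(X_T)$ is $(G/H_T)$-Borel homology with coefficients in the \emph{genuine} fixed points $\pi_*^{H_T}(\Sigma^{V|_{H_T}}H\Z/2)$. The restriction $V|_{H_T}$ still contains nontrivial $H_T$-representations (from those $\alpha\notin H_T^\perp$), so these groups are strictly smaller than $\Phi^{H_T}H\Z/2_*$. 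Concretely, for $n=2$ and $H_T=\mathrm{Ker}(\alpha)\cong\Z/2$, your formula gives $x^{m_\alpha}/(1-x)^2$, but the correct vertex series is $x^{m_\alpha}(1-x^{m_\beta+m_\gamma+1})/(1-x)^2$. You could instead invoke the induction hypothesis (Theorem~\ref{t2} for $H_T$) at these vertices, but then the M\"obius inclusion--exclusion is far more involved than you indicate, and you have not carried it out.

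\textbf{The collapse argument is circular.} In your cube, $\pi_*^G(\Sigma^V H\Z/2)$ is the $E^1$-term in filtration degree $0$; the abutment is $\Phi^G H\Z/2_*$. The injectivity in part~2 is exactly the statement that no differential hits $E^{r}_{0,*}$. You cannot deduce this from ``the alternating sum has non-negative coefficients'': that constrains the Euler characteristic but not the individual differentials. Nor does ``comparison with the multiplicative structure'' supply the missing step without further argument.

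\textbf{How the paper proceeds.} The paper does not use the full cube here. It runs a double induction on $n$ and on $\ell=|\{\alpha:m_\alpha>0\}|$, using at each step the single cofibration
\[
S(m_\ell\alpha_\ell)_+\wedge\Sigma^{V_{\ell-1}}H\Z/2\;\longrightarrow\;\Sigma^{V_{\ell-1}}H\Z/2\;\longrightarrow\;\Sigma^{V_\ell}H\Z/2.
\]
The essential point, where parts 1 and 2 genuinely interlock, is that the first map is zero on $\pi_*^G$: it factors through $(EG/\mathrm{Ker}\alpha_\ell)_+\wedge\Sigma^{V_{\ell-1}}H\Z/2$, and since $(EG/\mathrm{Ker}\alpha_\ell)_+\wedge\widetilde{E\mathcal{F}[G]}\simeq *$, the already-established injectivity $\Sigma^{V_{\ell-1}}H\Z/2\hookrightarrow\widetilde{E\mathcal{F}[G]}\wedge H\Z/2$ (part~2 at $\ell-1$) forces that map to vanish. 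This yields a short exact sequence on $\pi_*^G$, from which both parts follow; the base case $\ell=0$ is the combinatorial identity of Lemma~\ref{lcombo}.
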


\vspace{3mm}
We need the following purely combinatorial result. Let 
$$\left[\begin{array}{c}n\\k\end{array}
\right]=\frac{(2^n-1)\cdot (2^{n-1}-1)\cdot\dots\cdot (2^{n-k+1}-1)}{
(2^k-1)\cdot (2^{k-1}-1)\cdot \dots\cdot (2^1-1)}.$$
Note that this is the number of $k$-dimensional $\Z/2$-vector subspaces of $(\Z/2)^n$. The following statement amounts to part 1. of Theorem \ref{t2} for $m_\alpha=0$.

\vspace{3mm}

\begin{lemma}
\label{lcombo}
We have
$$
\sum_{k=0}^{n}(-1)^k \left[\begin{array}{c}n\\k\end{array}
\right] x^k\prod_{i=1}^{n-k}(1+(2^{i-1}-1)x)=(1-x)^n.
$$
\end{lemma}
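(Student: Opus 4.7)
The plan is to clear denominators by the substitution $t = x/(1-x)$ and then invoke the classical $q$-binomial theorem twice at $q=2$. With $x = t/(1+t)$ and $1-x = 1/(1+t)$, the key algebraic identity
$$1 + (2^{i-1}-1)x = \frac{1+2^{i-1}t}{1+t}$$
lets me rewrite each summand on the left-hand side. Since the $k$th summand contributes a factor of $(1+t)^{-n}$ (namely $(1+t)^{-k}$ from $x^k$ and $(1+t)^{-(n-k)}$ from the product), and since $(1-x)^n = (1+t)^{-n}$, multiplying through by $(1+t)^n$ converts the desired identity into the equivalent assertion
$$\sum_{k=0}^n (-1)^k \left[\begin{array}{c}n\\k\end{array}\right] t^k \prod_{i=1}^{n-k}(1+2^{i-1}t) = 1.$$

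To establish this reduced identity, I would expand the inner product using the $q$-binomial theorem at $q=2$, namely $\prod_{j=0}^{m-1}(1+2^{j}t) = \sum_{l=0}^m 2^{\binom{l}{2}}\left[\begin{array}{c}m\\l\end{array}\right] t^l$, and then collect terms by the total $t$-degree $d = k+l$. Using the elementary identity
$$\left[\begin{array}{c}n\\k\end{array}\right]\left[\begin{array}{c}n-k\\d-k\end{array}\right] = \left[\begin{array}{c}n\\d\end{array}\right]\left[\begin{array}{c}d\\k\end{array}\right],$$
which is immediate from the factorial presentation of the Gaussian binomial, the coefficient of $t^d$ factors as $(-1)^d \left[\begin{array}{c}n\\d\end{array}\right]$ times $\sum_{l=0}^d (-1)^l 2^{\binom{l}{2}}\left[\begin{array}{c}d\\l\end{array}\right]$, after reindexing by $l=d-k$ and using the symmetry $\left[\begin{array}{c}d\\d-l\end{array}\right] = \left[\begin{array}{c}d\\l\end{array}\right]$.

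Finally, by the $q$-binomial theorem in its other standard form, the inner sum equals the value at $s=1$ of $\prod_{j=0}^{d-1}(1-2^{j}s)$, i.e., $\prod_{j=0}^{d-1}(1-2^{j})$. For $d\ge 1$ this vanishes because its $j=0$ factor is $1-1=0$; for $d=0$ it equals $1$. Hence only the $d=0$ term survives in the sum over $d$, producing the constant $1$, as required. The main obstacle here is merely noticing the substitution $t = x/(1-x)$: once that is made, the product $\prod_i(1+(2^{i-1}-1)x)$ morphs into the canonical $q$-binomial form $\prod_{j=0}^{n-k-1}(1+2^{j}t)$ up to a power of $1+t$, and two transparent applications of the $q$-binomial theorem close out the argument without any need for induction or a separate combinatorial construction.
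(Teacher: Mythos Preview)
Your proof is correct. The substitution $t=x/(1-x)$ cleanly strips away the factor $(1-x)^n$, and the two invocations of the $q$-binomial theorem at $q=2$ (first to expand $\prod_{j=0}^{n-k-1}(1+2^jt)$, then to evaluate $\sum_{l=0}^d(-1)^l 2^{\binom{l}{2}}\left[\begin{smallmatrix}d\\l\end{smallmatrix}\right]$ as $\prod_{j=0}^{d-1}(1-2^j)$) are both standard and correctly applied. The intermediate identity $\left[\begin{smallmatrix}n\\k\end{smallmatrix}\right]\left[\begin{smallmatrix}n-k\\d-k\end{smallmatrix}\right]=\left[\begin{smallmatrix}n\\d\end{smallmatrix}\right]\left[\begin{smallmatrix}d\\k\end{smallmatrix}\right]$ and the symmetry $\left[\begin{smallmatrix}d\\d-l\end{smallmatrix}\right]=\left[\begin{smallmatrix}d\\l\end{smallmatrix}\right]$ are immediate from the $q$-factorial formula, so nothing is hidden.

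This is a genuinely different route from the paper's argument. The paper proceeds by induction on $n$: it applies the $q$-Pascal recurrence $\left[\begin{smallmatrix}n\\k\end{smallmatrix}\right]=\left[\begin{smallmatrix}n-1\\k\end{smallmatrix}\right]+2^{n-k}\left[\begin{smallmatrix}n-1\\k-1\end{smallmatrix}\right]$, splits the sum into two pieces, and observes that after peeling off the top factor $1+(2^{n-k-1}-1)x$ from the first piece the two surviving sums cancel, leaving $(1-x)^n$ via the induction hypothesis. That argument is entirely self-contained and needs no outside identities. Your approach, by contrast, is non-inductive and conceptually explains \emph{why} the identity holds: after the change of variable the left side is visibly a convolution that the $q$-binomial theorem collapses to its $d=0$ term because $\prod_{j=0}^{d-1}(1-2^j)$ vanishes for $d\ge 1$. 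The cost is that you import the $q$-binomial theorem as a black box; the benefit is a shorter, more structural proof that generalizes immediately (replacing $2$ by any $q$).
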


\begin{proof}
Induction on $n$. We have
$$
\left[\begin{array}{c}n\\k\end{array}
\right]=
\left[\begin{array}{c}n-1\\k\end{array}
\right]+2^{n-k}
\left[\begin{array}{c}n-1\\k-1\end{array}
\right],
$$ 
so by the induction hypothesis,
$$\begin{array}{l}
\displaystyle\sum_{k=0}^{n}(-1)^k \left[\begin{array}{c}n\\k\end{array}
\right] x^k\prod_{i=1}^{n-k}(1+(2^{i-1}-1)x)=\\[3ex]
\displaystyle\sum_{k=0}^{n}(-1)^k \left(\left[\begin{array}{c}n-1\\k\end{array}
\right]+2^{n-k}
\left[\begin{array}{c}n-1\\k-1\end{array}
\right]\right) x^k\prod_{i=1}^{n-k}(1+(2^{i-1}-1)x).
\end{array}
$$
Splitting the right hand side into two sums, we get
$$\begin{array}{l}
\displaystyle\sum_{k=0}^{n-1}(-1)^k \left[\begin{array}{c}n-1\\k\end{array}
\right] x^k\prod_{i=1}^{n-k}(1+(2^{i-1}-1)x)+\\[3ex]
\displaystyle\sum_{k=1}^{n}(-1)^k2^{n-k} \left[\begin{array}{c}n-1\\k-1\end{array}
\right] x^k\prod_{i=1}^{n-k}(1+(2^{i-1}-1)x)=\\[3ex]
\displaystyle
(1-x)^n+\sum_{k=0}^{n-1}(-1)^k \left[\begin{array}{c}n-1\\k\end{array}
\right] x^k\prod_{i=1}^{n-k-1}(1+(2^{i-1}-1)x)2^{n-k-1}+\\[3ex]
\displaystyle\sum_{k=1}^{n}(-1)^k2^{n-k} \left[\begin{array}{c}n-1\\k-1\end{array}
\right] x^k\prod_{i=1}^{n-k}(1+(2^{i-1}-1)x)=(1-x)^n.
\end{array}
$$
\end{proof}

\vspace{3mm}
\noindent
{\em Proof of Theorem \ref{t2}:}
We will proceed by induction on $n$. Assume 1. and 2. are true for lower values of $n$. Then, for the 
given $n$, we proceed by induction on
$$\ell=|\{\alpha\in G^*\mid m_\alpha>0\}|.$$
For $\ell=0$, 1. follows from Lemma \ref{lcombo} and 2. is obvious (by ring structure of
$\Phi^GH\Z/2$). Suppose $\ell\geq 1$ and 1., 2. are true for lower values of $\ell$. Setting
\beg{erefvl}{V_\ell=\sum_{i=1}^{\ell} m_{\alpha_i} \alpha_i,}
we will study the
effect on coefficients $(?)_*$ of the cofibration sequence
\beg{e1t2}{\diagram S(m_\ell\alpha_\ell)_+\wedge {\Sigma}^{V_{\ell-1}}H\Z/2\dto\\
 {\Sigma}^{V_{\ell-1}}H\Z/2\dto\\
 {\Sigma}^{V_\ell}H\Z/2.
\enddiagram
}
First, we observed that the first map factors through the top row of the diagram
\beg{e2t2}{\diagram
(EG/Ker \alpha_\ell)_+\wedge {\Sigma}^{V_{\ell-1}}H\Z/2\rto\dto
&
{\Sigma}^{V_{\ell-1}}H\Z/2\dto\\
(EG/Ker\alpha_\ell)_+\wedge \widetilde{E\mathcal{F}[G]}\wedge H\Z/2\rto&
\widetilde{E\mathcal{F}[G]}\wedge H\Z/2.
\enddiagram
}
Next, the right column of \rref{e2t2} is injective on $(?)_*$ by 2. for $\ell-1$, and hence the top row, 
and hence also the first map \rref{e1t2}, is $0$ on $(?)_*$.

Now the Poincare series of 
\beg{e3t2}{(S(m_\ell\alpha_\ell)_+\wedge {\Sigma}^{V_{\ell-1}}H\Z/2)^{G}_{*}
}
is
$$\frac{1-x^{m_\ell}}{1-x}$$
times the Poincare series of
\beg{e4t2}{
({\Sigma}^{V_{\ell-1}}H\Z/2)^{Ker\alpha_\ell}_{*},
}
which, when multiplied by $x$ and added to the Poincare series of
$$( {\Sigma}^{V_{\ell-1}}H\Z/2)^{G}_{*},$$
is \rref{eserr1} by the induction hypothesis. This proves 1.

To prove 2., we observe that the elements of \rref{e3t2} are generated by powers of $x_{\alpha_\ell}$
multiplied by elements of \rref{e4t2}, so again, we are done by the induction hypothesis.
\qed

\vspace{3mm}
\section{The complex calculating $RO(G)$-graded coefficients}\label{s5}

To calculate the $RO(G)$-graded coefficients of $H\Z/2_G$ in dimensions given by virtual representations,
we introduce another spectral sequence. In fact, we will again use the cofibration sequence \rref{egeom++}, but 
we will rewrite it as
\beg{egeom+++}{S^0\r S^{\infty\alpha}\r \Sigma EG/Ker(\alpha)_+.}
We will smash the second maps of \rref{egeom+++} over all $\alpha\in G^*\smallsetminus\{0\}$, to obtain a cube
\beg{egeom**}{\bigwedge_{\alpha\in G^*\smallsetminus \{0\}}(S^{\infty\alpha}\r \Sigma EG/Ker(\alpha)_+)
}
whose iterated fiber is $S^0$.
Our method is to smash with $H\Z/2_G$ and take $RO(G)$-graded coefficients:
\beg{egeom**1}{(\bigwedge_{\alpha\in G^*\smallsetminus \{0\}}(S^{\infty\alpha}\r \Sigma EG/Ker(\alpha)_+)\wedge
H\Z/2)_\star,
}
thus yielding a spectral sequence calculating $H\Z/2_\star$.

However, there is a key point to notice which drastically simplifies this calculation. Namely,  smashing \rref{egeom++}
with $EG/Ker(\alpha)+$, the first morphism becomes an equivalence, thus showing that
\beg{ezero1}{EG/Ker(\alpha)_+\wedge S^{\infty\alpha} \simeq *.
}
Together with \rref{egeom2}, this shows that the only vertices of the cube \rref{egeom**} which are
non-zero are actually those of the form where all the $\alpha$'s for which we take the term $S^{\infty\alpha}$
in \rref{egeom**} are those {\em not vanishing} on some subgroup $A\subseteq G$, while those $\alpha$'s 
for which we take the term $\Sigma EG/Ker(\alpha)_+$ are those non-zero elements of $G^*$ which
{\em do vanish} on $A$, i.e. non-zero elements of $(G/A)^*$. The corresponding vertex of \rref{egeom**} is
then a suspension of
\beg{evert1}{gr_A(S^0)=EG/A_+\wedge \widetilde{E\mathcal{F}[A]}.
}
We also put 
$$gr_A(H\Z/2)=gr_A(S^0)\wedge H\Z/2.$$
Because of the high number of zero terms, the spectral sequence may be regraded by $rank_{\Z/2}(A)$, thus having 
only $n$, instead of $2^n-1$, filtration degrees. (Note that the cube \rref{egeom**} 
may be reinterpreted as a ``filtration" of the spectrum $S^0$; from this point of view, we have simply observed
that many of the filtered parts coincide.)

It is now important, however, to discuss the grading seriously. Since we index coefficients homologically, 
we will write the spectral sequence in homological indexing. Additionally, we want the term $gr_G(S^0)$ be in 
filtration degree $0$ (since that is where the unit is). Thus, the (homologically indexed) filtration degree of \rref{evert1} is
$$p=rank(A)-n$$
(a non-positive number). Thus, 
$$\pi_k({\sum}^{\sum_{\alpha\in G^*\smallsetminus \{0\}}m_\alpha \alpha}gr_A(H\Z/2)\subseteq 
E^{1}_{rank(A)-n,k+n-rank(A)}
$$
or, put differently, for a given choice of the $m_\alpha$'s, 
\beg{estare1}{E^{1}_{p,q}=\bigoplus_{rank(A)=n+p}\pi_{q+p-\sum m_\alpha \alpha}gr_A(H\Z/2),\; p=-n,\dots,0.
}
We will next describe explicitly the differential
\beg{estard1}{d^1:E^{1}_{p,q}\r E^{1}_{p-1,q}.
}
Let us first introduce some notation. To this end, we need to start out by describing the $E^1$-term more 
explicitly. 

In effect, we can calculate $gr_A(H\Z/2)_\star$ by taking first the $A$-fixed points using 
Theorem \ref{t1} with $G$ replaced by $A$, and then applying the Borel homology spectral sequence
for $G/A$. This spectral sequence collapses because there exists a splitting
\beg{esplitting}{\diagram
A\rto^\subseteq\drto_= & G\dto^r\\
&A.
\enddiagram
}
However, the splitting is not canonical, and this is reflected by the choice of generators we observe. More explicitly,
the splitting determines for each representation
$$0\neq \beta:A\r \Z/2$$
an extension
$$\widetilde{\beta}:G\r\Z/2.$$
One difficulty with describing Borel homology is that it does not naturally form a ring. Because of that, it is more convenient to describe first the coefficients of
\beg{egamma1}{\gamma_{A}(H\Z/2):=F(EG/A_+,\widetilde{E\mathcal{F}[A]})\wedge H\Z/2.
}
This is an ($E_\infty$-) ring spectrum, and its ring of coefficients is given by
\beg{estargamma}{\begin{array}{l}\gamma_A(H\Z/2)_\star=\\
(\Z/2[x_{\widetilde{\beta}},u^{\pm1}_{\widetilde{\beta}}, 
u^{\pm1}_{\widetilde{\beta}+\alpha}
\mid \beta\in A^*\smallsetminus \{0\},\; \alpha\in (G/A)^*\smallsetminus \{0\}]/\\
(x_{\widetilde{\alpha}}x_{\widetilde{\beta}}+x_{\widetilde{\alpha}}x_{\widetilde{\gamma}}
+x_{\widetilde{\beta}}x_{\widetilde{\gamma}}\mid \alpha+\beta+\gamma=0))
[(y_\alpha u^{-1}_{\alpha})^{\pm 1}]\\
{}[[y_\alpha\mid
\alpha\in (G/A)^*\smallsetminus \{0\}]]/
(y_{\alpha+\alpha^\prime}-y_{\alpha}-y_{\alpha^\prime})
\end{array}
}
where the $RO(G)$-graded dimensions of the generators are
$$|u_\gamma|=-\gamma,\; |x_\gamma|=1, \;|y_\gamma|=-1.$$
We may then describe $gr_A(H\Z/2)_\star$ as the $dim_{\Z/2}(G/A)$'th (=only non-trivial)
local cohomology module of the ring $\gamma_A(H\Z/2)$ with respect to the ideal generated by the $y_\alpha$'s.
Note that after taking $A$-fixed points first, this is the usual computation of $G/A$-Borel homology from
the corresponding Borel cohomology. Recall that $H^*_I(R)$ for a finitely generated ideal $I$ of a commutative ring $R$
is obtained by choosing finitely many generators $y_1,\dots, y_\ell$ of $I$, tensoring, over $R$, the cochain
complexes
$$R\r y_i^{-1}R$$
(with $R$ in degree $0$) and taking cohomology. It is, canonically, independent of the choice of generators. 
In the present case, we are simply dealing with the power series ring $R$ in $dim_{\Z/2}(G/A)$ generators
over a $\Z/2$-algebra, and the augmentation ideal. Taking
the defining generators of the power series ring, we see immediately that only the top local cohomology group
survives.

We note that the basic philosophy of our notation is
\beg{ephil}{``y_\alpha = x_{\alpha}^{-1}".}
As a first demonstration of this philosophy, let us investigate the effect of a change of the splitting \rref{esplitting}.
Writing metaphorically
\beg{emeta1}{x_{\widetilde{\beta}+\alpha}x_{\widetilde{\beta}}+
x_{\widetilde{\beta}+\alpha}x_{{\alpha}}+x_{\widetilde{\beta}} x_\alpha=0,
}
we get
\beg{emeta2}{x_{\widetilde{\beta}+\alpha}x_{\widetilde{\beta}}y_\alpha+
x_{\widetilde{\beta}+\alpha}+x_{\widetilde{\beta}}=0,
}
from which we calculate
\beg{emeta3}{x_{\widetilde{\beta}+\alpha}=x_{\widetilde{\beta}}(1+x_{\widetilde{\beta}}y_\alpha)^{-1}
=\sum_{k=0}^{\infty} x_{\widetilde{\beta}}^{k+1}y_{\alpha}^{k}.
}
This formula is correct in $\gamma_A(H\Z/2)_\star$ and hence can also be used in the module $gr_A(H\Z/2)_\star$.

Next, we will describe the $d^1$ of \rref{estare1}. These connecting maps will be the sums of maps of degree $-1$
of the form
\beg{emetad1}{d^{AB}:gr_A(H\Z/2)_\star\r  gr_B(H\Z/2)_\star
}
where $B\subset A$ is a subgroup with quotient isomorphic to $\Z/2$. Let $\beta:A\r\Z/2$ be the unique
non-trivial representation which vanishes when restricted to $A$. The key point is to observe that the 
canonical map
\beg{emetad11}{\diagram
EG/A_+\wedge \widetilde{E\mathcal{F}[B]}\wedge S^{\infty\widetilde{\beta}}
\rto^(.55)\sim 
&
EG/A_+\wedge \widetilde{E\mathcal{F}[A]}
\enddiagram
}
is an equivalence, and hence \rref{emetad1} can be calculated by smashing with $H\Z/2$ the connecting
map
\beg{emetad12}{EG/A_+\wedge \widetilde{E\mathcal{F}[B]}\wedge S^{\infty\widetilde{\beta}}
\r \Sigma EG/B_+\wedge \widetilde{E\mathcal{F}[B]}.
}
Consequently, \rref{emetad1} is a homomorphism of $\gamma_AH\Z/2_\star$-modules, and is computed, just like
in dimension $1$, by replacing
$$x_{\widetilde{\beta}}\mapsto y_{\widetilde{\beta}}^{-1}$$
and multiplying by $y_{\widetilde{\beta}}$. (Note that independence of the splitting $\widetilde{\beta}$ at this point
follows from topology; it is a non-trivial fact to verify purely algebraically.

\vspace{3mm}
We have thereby finished describing the $d^1$ of the spectral sequence 
\rref{estare1}. The main result of the present section is the following 

\vspace{3mm}

\begin{theorem}
\label{t3}
The spectral sequence \rref{estare1} collapses to $E^2$. 
\end{theorem}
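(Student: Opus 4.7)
The plan is to prove collapse by combining the multiplicative structure of the spectral sequence with a Leibniz-rule argument, using Theorem \ref{t2} as the source of a base set of permanent cycles.

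First, I would verify that \rref{estare1} is multiplicative. The cube \rref{egeom**} is a smash product, over $\alpha\in G^*\smallsetminus\{0\}$, of the elementary cofiber sequences $S^{\infty\alpha}\to\Sigma EG/Ker(\alpha)_+$, so the standard diagonal comparison of cubes (together with the $E_\infty$ ring structure on $H\Z/2$) endows the associated spectral sequence with a pairing obeying the Leibniz rule, with filtration degrees adding. The pairing is compatible with the $\gamma_A(H\Z/2)_\star$-module structure on each $gr_A(H\Z/2)_\star$ and with the restriction-type maps implicit in the description of $d^1$.

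Next, I would identify a multiplicative generating set of permanent cycles. The key input is part 2 of Theorem \ref{t2}: for $V=\sum m_\alpha\alpha$ with all $m_\alpha\ge 0$, the canonical map $\Sigma^V H\Z/2\to\widetilde{E\mathcal{F}[G]}\wedge H\Z/2$ is injective on $\Z$-graded homotopy. Since the target is the top filtration piece $gr_G(H\Z/2)$ of our filtration, every such class is detected in filtration $p=0$ and is therefore a permanent cycle in \rref{estare1}. To handle the remaining bidegrees, I would combine these positive-cone classes multiplicatively with suitable $d^1$-cycle representatives coming from the local-cohomology generators of $gr_A(H\Z/2)_\star$ (which sit in filtration $p=\mathrm{rank}(A)-n<0$), verifying by means of \rref{estargamma} and \rref{emeta3} that the resulting set generates $E^2=H_*(E^1,d^1)$ as a $\Z/2$-vector space.

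With such a set in hand, the Leibniz rule forces $d^r=0$ for all $r\ge 2$: every $E^2$-class is a product of permanent cycles, hence permanent. The main obstacle is producing this multiplicative generating set—i.e., algebraically reorganizing $gr_A(H\Z/2)_\star$ so that the positive-cone classes and certain $d^1$-cycles really do multiplicatively generate $E^2$. This requires carefully tracking the non-canonical splitting \rref{esplitting} and the power-series expansions \rref{emeta3}, making sure no secondary ambiguity creates a hidden higher differential. As an independent consistency check, one compares the Poincar\'e series of $H_*(E^1,d^1)$ against the explicit formula in Theorem \ref{t2} in the positive cone of $RO(G)$-gradings, where the $E^1$-page is concentrated in filtration $p=0$ and the answer is already known.
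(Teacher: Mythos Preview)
Your Leibniz-rule strategy has a structural gap that cannot be patched without doing essentially all the work the theorem requires. In the multiplicative pairing on $E^r$, filtration degrees add exactly: a class in filtration $p_1$ times a class in filtration $p_2$ lies in filtration $p_1+p_2$. The permanent cycles you produce from Theorem~\ref{t2} all live in filtration $p=0$, so their products remain in filtration $0$. Classes in the bottom filtration $p=-n$ are trivially $d^r$-cycles for all $r$, but products of filtration-$0$ and filtration-$(-n)$ classes land in filtration $-n$. Thus nothing you have written gives permanent cycles in the intermediate filtration degrees $p\in\{-1,\dots,-(n-1)\}$, and those degrees are genuinely populated in $E^2$ for mixed-sign $RO(G)$-gradings (already for $n=2$, as Section~\ref{s6} shows). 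When you say ``combine these positive-cone classes multiplicatively with suitable $d^1$-cycle representatives \dots'' you are tacitly assuming those $d^1$-cycles are permanent cycles for all $d^r$, $r\ge 2$; but that is exactly the content of the theorem. The argument is circular at that step.

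The paper's proof avoids multiplicativity entirely and instead runs a double induction. Given a general suspension, it separates the negative-coefficient $\alpha$'s from the non-negative ones, builds a cell decomposition of $\Sigma^{V}H\Z/2$ by iterated cofiber sequences \rref{et3l22}, and introduces an auxiliary grading (the ``$A$-relative degree'' of a cell, where $A=\bigcap_{m_\alpha<0}\mathrm{Ker}(\alpha)$). Lemma~\ref{t3l2} handles the pure cases (all $m_\alpha\ge 0$ or all $m_\alpha\le 0$), and Lemma~\ref{t3l3} shows that within each fixed $A$-relative degree the cube spectral sequence collapses to a single filtration degree. One then filters by $A$-relative degree to obtain a secondary spectral sequence $\mathcal{E}$ converging to the $E^2$ of \rref{estare1}; because every $\mathcal{E}^1$-term lies in the same \rref{estare1}-filtration degree, $\mathcal{E}$ is concentrated in one complementary degree. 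The same filtration run on homotopy groups gives an isomorphic $\widetilde{\mathcal{E}}$, and a counting comparison yields $E^2\cong E^\infty$. The essential idea you are missing is this cellular reduction to single-filtration-degree pieces; without it there is no independent source of permanent cycles in the intermediate filtration degrees.
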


\vspace{3mm}
We will first prove some auxiliary results. 

\vspace{3mm}

\begin{lemma}
\label{t3l1}
The Borel homology spectral sequence of any cell $H\Z/2_G$-module with 
cells
\beg{et3+}{\Sigma^?G_+\wedge H\Z/2
}
collapses to $E^2$.
\end{lemma}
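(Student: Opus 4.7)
The plan is to reduce to a single cell via the cellular filtration and then propagate by induction, with Shapiro's lemma providing the base case. The Borel homology spectral sequence in question has $E^{2}$-page $H_{*}(G;\pi^{e}_{*}(X))$ and converges to $\pi^{G}_{*}(EG_{+}\wedge X)$.

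First I would handle a single free cell $\Sigma^{?}G_{+}\wedge H\Z/2$. Wirthm\"uller's isomorphism identifies the underlying homotopy with a (shifted) copy of the regular representation $\Z/2[G]$, and Shapiro's lemma then shows $H_{*}(G;\Z/2[G])=\Z/2$ is concentrated in a single total degree. So the $E^{2}$-page is one-dimensional, the spectral sequence collapses trivially, and by additivity the same conclusion holds for any wedge of such cells.

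For a general cell $H\Z/2_{G}$-module $X$, I would induct on the cellular filtration $X^{(n)}$. Each cofibre sequence $X^{(n-1)}\to X^{(n)}\to X^{(n)}/X^{(n-1)}$ induces a map of Borel spectral sequences in which the outer two collapse at $E^{2}$ by induction. The hard part will be ruling out higher differentials on $E^{r}(X^{(n)})$ for $r\geq 2$ introduced by the attaching maps. I expect to handle this by exploiting freeness of the cells: because $(EG_{+}\wedge G_{+}\wedge H\Z/2)^{G}\simeq H\Z/2$ non-equivariantly, the Borel homology of $X$ can be computed directly from the cellular chain complex of $X$ as an $H\Z/2_{G}$-module, and comparing this direct computation against the $E^{2}$-page of the Borel spectral sequence forces all higher differentials to vanish.
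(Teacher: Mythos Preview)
Your treatment of a single free cell (via Wirthm\"uller and Shapiro) and of wedges of such cells is correct and matches the paper's implicit base case. The divergence is in how you handle a general cell module.

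The paper does not induct on the cellular filtration at all. Instead it takes $G$-fixed points of $X$, obtaining a cell $H\Z/2$-module with one cell per free cell of $X$; identifies the homotopy category of $H\Z/2$-modules with the derived category of $\F_2$-vector spaces; and then invokes the elementary fact that every chain complex over a field is \emph{isomorphic} (not merely quasi-isomorphic) to a direct sum of an acyclic complex and shifts of the ground field. That single splitting reduces the general case to a wedge of individual free cells in one stroke, after which your own base case finishes the argument.

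Your inductive route is more laborious, and the closing counting argument has a genuine gap. You propose to compute the Borel homology of $X$ directly from the cellular chain complex over $\F_2$ and then ``compare'' against $E^2$ to force the higher differentials to vanish. But convergence of the spectral sequence only yields $\dim_{\F_2} E^2 \geq \dim_{\F_2} E^\infty = \dim_{\F_2}(\text{target})$; nothing in your outline supplies the reverse inequality $\dim_{\F_2} E^2 \leq \dim_{\F_2}(\text{target})$. Indeed, if one reads ``Borel homology spectral sequence'' as the standard one with $E^2_{p,q}=H_p(G;\pi^e_q(X))$, then for $G=\Z/2$ the cofiber of $1+g$ acting on $G_+\wedge H\Z/2$ already has infinite-dimensional $E^2$ and two-dimensional target, so no dimension count can possibly succeed. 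What makes the paper's argument work is exactly the $\F_2$-splitting you do not invoke: it lets one replace $X$ by a wedge of single cells \emph{before} running any spectral sequence, rather than trying to control differentials after the fact.
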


\begin{proof}
Taking $G$-fixed point, we obtain a cell $H\Z/2$-module with one cell for each cell \rref{et3+}. Now the homotopy 
category of $H\Z/2$-modules is equivalent to the derived category of $\mathbb{F}_2$-vector spaces, and a 
chain complex of 
$\mathbb{F}_2$-modules is isomorphic to a sum of an acyclic module and suspensions of $\mathbb{F}_2$.
\end{proof}

\vspace{3mm}

\begin{lemma}
\label{ltensor}
Let $G$,$H$ be finite groups and let $X$ be an $G$-cell spectrum and let $Y$ be an $H$-cell spectrum 
(all indexed over the complete universe). 
Then
$$(H\Z/2_{G\times H} \wedge i_\sharp X\wedge j_\sharp Y)^{G\times H} \simeq 
(H\Z/2_G\wedge X)^G\wedge_{H\Z/2} (H\Z/2_H\wedge Y)^H.$$
Here on the left hand side, $i_\sharp$ is the functor introducing trivial $H$-action on a $G$-spectrum and pushing forward
to the complete universe, while $j_\sharp$ is the functor introducing trivial $G$-action on an $H$-spectrum and
pushing forward to the complete universe.
\end{lemma}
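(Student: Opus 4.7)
The strategy is a cellular reduction. Both sides of the asserted equivalence are triangulated functors of $X$ (and of $Y$) which preserve wedges and cofiber sequences: smashing with $H\Z/2_{G\times H}$, passage to $(G\times H)$-categorical fixed points (applied to cell $H\Z/2$-modules, where it is well-behaved), and the $H\Z/2$-module smash product all commute with these operations up to weak equivalence. A transfinite induction on the cellular filtrations of $X$ and $Y$ therefore reduces the claim to the case where $X$ and $Y$ are single cells
$$X = G_+ \wedge_K S^V, \qquad Y = H_+ \wedge_L S^W,$$
with $K\leq G$, $L\leq H$, and $V$, $W$ representations of $K$, $L$, respectively.

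For such cells, the Wirthm\"uller isomorphism --- together with the fact that the restriction of a constant Mackey functor spectrum along a subgroup inclusion is again the constant Mackey functor --- yields
$$(H\Z/2_G \wedge G_+\wedge_K S^V)^G \simeq (H\Z/2_K \wedge S^V)^K,$$
and analogously for $Y$. On the other side, $i_\sharp X \wedge j_\sharp Y \simeq (G\times H)_+\wedge_{K\times L}S^{V\oplus W}$, so Wirthm\"uller for $K\times L\leq G\times H$ gives
$$(H\Z/2_{G\times H}\wedge i_\sharp X\wedge j_\sharp Y)^{G\times H}\simeq (H\Z/2_{K\times L}\wedge S^{V\oplus W})^{K\times L}.$$
The lemma is therefore equivalent to the external K\"unneth identity
\beg{ekunlten}{(H\Z/2_{K\times L}\wedge S^{V\oplus W})^{K\times L}\simeq (H\Z/2_K \wedge S^V)^K \wedge_{H\Z/2} (H\Z/2_L \wedge S^W)^L}
for all finite groups $K$, $L$ and representations $V$, $W$.

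To establish \rref{ekunlten}, one constructs a natural comparison map from the commutative-ring structure on $H\Z/2$, using the map $i_\sharp H\Z/2_K \wedge j_\sharp H\Z/2_L \to H\Z/2_{K\times L}$ produced by the uniqueness of the constant Mackey functor, together with the lax symmetric monoidal structure on categorical fixed points. I would verify this is an equivalence by a secondary cellular induction on the representations $V$ and $W$: each $S^V$ is built from $S^0$ by iteratively coning off inclusions $S^0 \to S^\beta$, whose cofibers $\Sigma S(\beta)_+$ admit filtrations by $(K/\ker\beta)$-orbit cells already handled in the primary reduction. The base case $V = W = 0$ reduces, via Lemma \ref{t3l1} which identifies the homotopy category of $H\Z/2$-modules with $D(\mathbb{F}_2)$, to the trivial identity $\mathbb{F}_2\otimes_{\mathbb{F}_2}\mathbb{F}_2 = \mathbb{F}_2$.

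The main obstacle is \rref{ekunlten}: one must carefully distinguish the internal smash of $(G\times H)$-spectra in the complete universe from the external smash via $i_\sharp$, $j_\sharp$, and verify that the resulting comparison map is indeed the natural one. The other steps --- the cellular reductions, the Wirthm\"uller isomorphisms, and the identification of the homotopy category of $H\Z/2$-modules with chain complexes of $\mathbb{F}_2$-vector spaces --- are formal, or standard from \cite{lms}.
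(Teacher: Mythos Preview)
Your approach is correct and shares the paper's core idea: construct the comparison map from the lax monoidal structure of categorical fixed points together with the ring multiplication on $H\Z/2$, then verify it is an equivalence by reduction to cells. The difference is one of economy. The paper proceeds in two strokes: it first treats the special case $Y=S^0$, where the forgetful map $(i_\sharp X\wedge H\Z/2)^{G\times H}\to (X\wedge H\Z/2)^G$ is an equivalence (checked on cells of $X$); then it writes down the general comparison map and observes directly that it is an equivalence on cells of both $X$ and $Y$. Your route inserts two extra layers --- a Wirthm\"uller reduction from $G$, $H$ to arbitrary subgroups $K$, $L$, and then a secondary cellular induction on the representations $V$, $W$ --- which are not needed if one takes the standard convention that $G$-cell spectra have cells of the form $G/K_+\wedge S^n$ with $n\in\Z$ (so that the base case is already $S^0$ rather than a representation sphere). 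Your detour does no harm, and arguably makes the role of the K\"unneth map more explicit, but the paper's argument reaches the same conclusion with less scaffolding. One minor point: the identification of the homotopy category of $H\Z/2$-modules with $D(\mathbb{F}_2)$ appears in the \emph{proof} of Lemma~\ref{t3l1}, not its statement, so your citation is slightly off.
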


\begin{proof}
First consider $Y=S^0$. Then we have the forgetful map
$$(i_\sharp X\wedge H\Z/2)^{G\times H}\r (X\wedge H\Z/2)^G$$
which is an equivalence because it is true on cells.

In general, we have a map
$$Z^\Gamma\wedge T^\Gamma \r (Z\wedge T)^\Gamma,$$
so take the composition
$$
\begin{array}{l}
(X\wedge H\Z/2)^G\wedge (Y\wedge H\Z/2)^H=\\
(i_\sharp X\wedge H\Z/2)^{G\times H}\wedge (j_\sharp Y\wedge H\Z/2)^{G\times H}\r\\
(i_\sharp H\Z/2\wedge \j_\sharp Y\wedge H\Z/2)^{G\times H}\r\\
(i_\sharp X\wedge j_\sharp Y)^{G\times H}
\end{array}$$
(the last map coming from the ring structure on $H\Z/2$). Then again this map is an equivalence on cells, 
and hence an equivalence.
\end{proof}

\vspace{3mm}

\begin{lemma}
\label{t3l2}
Recalling again the notation \rref{erefv},
(a) the spectral sequence \rref{egeom**1} for 
\beg{et3l21*}{\pi_*{\Sigma}^{V}H\Z/2}
with all $m_\alpha\geq 0$ collapses to the $E^2$-term in filtration degree $0$.

(b) Let $m_\alpha\leq 0$ for all $\alpha$ and let 
$$S=\{\alpha\in G^*\smallsetminus\{0\}\mid m_\alpha\neq 0\}.$$ 
Suppose the subgroup of $G^*$ spanned by $S$ has rank $m$. Then the spectral sequence \rref{egeom**1} for 
\rref{et3l21*} collapses to $E^2$ in filtration degree $-m$.
\end{lemma}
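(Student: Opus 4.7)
The plan is to identify the $E^2$-term directly as the homology of the $d^1$-complex \rref{estard1}, and show that under the sign hypotheses on the $m_\alpha$, this homology is concentrated in a single filtration degree. Collapse at $E^2$ then follows automatically, since $E^2_{p,*}=0$ outside one filtration leaves no room for higher differentials.

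For part (a), the input is Theorem \ref{t2}(2). The canonical map
$$\Sigma^V H\Z/2 \longrightarrow \Sigma^V \widetilde{E\mathcal{F}[G]} \wedge H\Z/2 = \Sigma^V gr_G(H\Z/2)$$
is precisely the edge map from the iterated fiber of the $V$-suspended cube \rref{egeom**} to its filtration-$0$ vertex. By Theorem \ref{t2}(2) it is injective on $\pi_*^G$, so every homotopy class of $\pi_*^G(\Sigma^V H\Z/2)$ is already detected in filtration $0$, forcing $E^\infty_{p,*}=0$ for $p<0$. To upgrade this to $E^2_{p,*}=0$ for $p<0$, I would match Poincare series: the contributions $\pi_*^G(\Sigma^V gr_A(H\Z/2))$ are computed from \rref{estargamma} together with Corollary \ref{cor1}, and their alternating sum (over subgroups $A$ graded by rank) is shown to equal \rref{eserr1} by a direct generalization of Lemma \ref{lcombo} with the weights $x^{\sum_{\alpha\in H\smallsetminus\{0\}}m_\alpha}$ inserted. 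Combined with the concentration of $E^\infty$ at $p=0$, this forces $(E^1_{*,*},d^1)$ to be exact in filtrations $p<0$.

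For part (b), I mirror the argument of part (a) using a dual edge map. Setting $A_0:=\bigcap_{\alpha\in S}\ker(\alpha)$ (so $\mathrm{rank}(A_0)=n-m$), I expect the canonical map
$$(EG/A_0)_+ \wedge \Sigma^V H\Z/2 \longrightarrow \Sigma^V H\Z/2$$
to be surjective on $\pi_*^G$ when all $m_\alpha\le 0$ and $S$ spans a rank-$m$ subspace. This map can be related to the composite of edge maps from the filtration-$(-m)$ vertices of \rref{egeom**} (in particular the summand indexed by $A=A_0$) back to the iterated fiber, so its surjectivity gives $E^\infty_{p,*}=0$ for $p>-m$. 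A Poincare-series count in the spirit of part (a), computing the alternating sum of the $E^1$-contributions via \rref{estargamma} and Corollary \ref{cor1}, then upgrades this to $E^2_{p,*}=0$ for $p>-m$.

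The main obstacle will be establishing the dual surjectivity used in part (b). This is the mirror of Theorem \ref{t2}(2) and is not stated explicitly earlier, but I expect it to follow by an induction on $\ell=|S|$ parallel to the proof of Theorem \ref{t2}, using cofiber sequences analogous to \rref{e1t2} and \rref{e2t2} but reversed: replacing the positive suspension $S^{m_\ell\alpha_\ell}$ by $S^{-|m_\ell|\alpha_\ell}$ exchanges the roles of source and target in the long exact sequence, flipping ``injective'' to ``surjective'' and moving the detection to the bottom filtration rather than the top. Once this surjectivity is in hand, the Poincare-series matching step is parallel to part (a), and no harder.
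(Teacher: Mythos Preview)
Your outline has a genuine gap at the step ``Combined with the concentration of $E^\infty$ at $p=0$, this forces $(E^1_{*,*},d^1)$ to be exact in filtrations $p<0$.'' Knowing that $E^\infty$ is concentrated in filtration $0$, even together with an alternating Poincar\'e-series identity, does \emph{not} force $E^2$ to be concentrated there. Nothing you have written excludes, say, $E^2_{-1,q}\cong\Z/2\cong E^2_{-3,q+1}$ with a nonzero $d_2$ between them; this leaves $E^\infty_{p,*}=0$ for $p<0$ while $E^2$ is not concentrated at $p=0$. There is also a bookkeeping problem with the Poincar\'e-series step itself: $d^1$ preserves the complementary degree $q$ but lowers total degree $p+q$ by $1$, so the Euler-characteristic identity $\sum_p(-1)^p\dim E^1_{p,q}=\sum_p(-1)^p\dim E^2_{p,q}$ holds for each fixed $q$, not for fixed total degree, and neither version is equal to $\dim\pi_k(\Sigma^V H\Z/2)$ until you already know the spectral sequence collapses. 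So the Poincar\'e matching is not doing the work you want. For (b) you correctly flag the dual surjectivity as the main obstacle but do not prove it, so that part remains a sketch.

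The paper's argument is quite different and avoids both issues. It never invokes Theorem~\ref{t2} or any global Poincar\'e-series count; instead it builds $V$ one $\alpha_k$ at a time and tracks the $E^2$ filtration degree inductively. When $\alpha_k$ is linearly independent of $\alpha_1,\dots,\alpha_{k-1}$, Lemma~\ref{ltensor} reduces to a tensor product with a rank-one case. When $\alpha_k$ is dependent, the cofibration \rref{et3l22} (for (a)) or \rref{et3l23} (for (b)) yields a short exact sequence $0\to E^1A\to E^1B\to E^1C\to 0$ of entire $E^1$-complexes; by the induction hypothesis the homologies of the $A$- and $B$-terms sit in single, adjacent filtration degrees, and the long exact sequence in homology then pins the $E^2$ of the $C$-term to a single filtration degree as well. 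This inductive bookkeeping on the $E^1$-level is exactly what your edge-map-plus-counting approach is missing.
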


\begin{proof}
Recall the notation \rref{erefvl}.
Let $G^*\smallsetminus \{0\}=\{\alpha_1,\dots,\alpha_{2^n-1}\}$. When $\alpha_k$ is linearly independent
of $\alpha_1,\dots,\alpha_{k-1}$, we have
\beg{et3l21}{
\begin{array}{l}
\displaystyle
\pi_*{\Sigma}^{V_k}H\Z/2\cong\\[4ex]
\displaystyle
\pi_*\left({\Sigma}^{V_{k-1}}H\Z/2\right)^G
\otimes
\pi_*(\Sigma^{m_{\alpha_k} \gamma} H\Z/2)^{\Z/2}
\end{array}
}
where $\gamma$ is the sign representation of $\Z/2$m by Lemma \ref{ltensor}. Note that in the case (b), we may, without loss of generality, assume $m=n$ (i.e. that $S$ spans $G^*$) and that what we just said occurs for $k=1,\dots,n$
and additionally that $m_{\alpha_i}<0$ for $i=1,\dots,n$.

When $\alpha_k$ is a linear combination of $\alpha_1,\dots,\alpha_{k-1}$, and $m_{\alpha_k}\neq 0$,
we use the cofibration sequence
\beg{et3l22}{\begin{array}{l}\displaystyle S(m_{\alpha_k}\alpha_k)_+\wedge 
{\Sigma}^{V_{k-1}}H\Z/2\r\\[4ex]\displaystyle
{\Sigma}^{V_{k-1}}H\Z/2\r {\Sigma}^{
V_k}H\Z/2
\end{array}
}
in the case (a) and
\beg{et3l23}{\begin{array}{l}\displaystyle 
{\Sigma}^{V_k}H\Z/2\r\\[4ex]\displaystyle
{\Sigma}^{V_{k-1}}H\Z/2\r DS(-m_{\alpha_k}\alpha_k)_+\wedge {\Sigma}^{
V_{k-1}}H\Z/2
\end{array}
}
in the case (b). If we denote each of these cofibration sequences symbolically as
$$A\r B\r C,$$
then in the case (a), \rref{et3l22} gives a short exact sequence of the form
\beg{et3l22a}{0\r E^1 A\r E^1 B\r E^1C\r 0 
}
of the spectral sequence of \rref{egeom**1} where in the $A$-term, we replace $G$ by $Ker (\alpha_k)$
and $H\Z/2$ by $S(m_k\alpha_k)_+\wedge H\Z/2$. By the induction hypothesis, however, the homology of 
$E^1A$ is concentrated in the top filtration degree, which is $-1$ from the point of view of $G$, and the 
homology of $E^1B$ is concentrated in filtration degree $0$, so the long exact sequence in homology 
gives
\beg{et3l22b}{0\r E^2\r E^2C\r \Sigma E^2A\r 0
}
which is all in filtration degree $0$, so our statement follows.

In the case (b), by our assumptions, we have $k>n$. Additionally, \rref{et3l23} gives a short exact
sequence
\beg{et3l23a}{0\r \Sigma^{-1}E^1C\r E^1 A\r E^1 B\r 0,
}
but by the induction hypothesis (using the fact that a set of generators of $G^*$ projects to a set of generators
of the factor group $Ker(\alpha_k)^*$), the homology of the first and last term is concentrated in 
filtration degree $-n$, so \rref{et3l23a} translates to the same short exact sequence with $E^1$
replaced by $E^2$, which is entirely in filtration degree $-n$, and the statement follows.
\end{proof}

\vspace{3mm}

To continue the proof of Theorem \ref{t3}, let again 
$$G^*\smallsetminus \{0\}=\{\alpha_1,\dots,\alpha_{2^n-1}\}.$$
Consider
\beg{et3p*}{{\Sigma}^{V_{2^n-1}} H\Z/2
}
and let, this time, without loss of generality, 
$$m_{\alpha_1},\dots,m_{\alpha_q}<0,$$ 
$$m_{\alpha_{q+1}},\dots,m_{\alpha_{2^n-1}}\geq 0.$$
Let $A=Ker(\alpha_1)\cap\dots\cap Ker(\alpha_q).$ We will consider the sequence of cofibrations
\rref{et3l22} with $q\leq k< 2^n-1$. Resolving this recursively, we may consider this as a cell object 
construction in the category of $H\Z/2_G$-modules, with "cells" of the form of suspensions (by an integer) of
\beg{et3p+}{\begin{array}{l}
\displaystyle G/(Ker(\alpha_{j_1})\cap\dots\cap Ker(\alpha_{j_p})_+\wedge
{\Sigma}^{V_q} H\Z/2,\\[2ex]
 q<j_1<\dots <j_p\leq 2^n-1.
\end{array}
}
By the {\em degree} of a cell $c$, we shall mean the number
$$deg(c)=n-rank(Ker(\alpha_{j_1})\cap\dots\cap Ker(\alpha_{j_p})),
$$
and by the {\em $A$-relative degree} of $c$, we shall mean
$$\begin{array}{l}deg_A(c)=rank(G/A)-\\
rank(Ker(\alpha_{j_1})\cap\dots\cap Ker(\alpha_{j_p})/Ker(\alpha_{j_1})\cap\dots\cap 
Ker(\alpha_{j_p})\cap A).
\end{array}$$
We see easily from the construction that cells of a given degree are attached to cells of strictly lower degree,
and that cells of a given $A$-relative degree are attached to cells of lesser or equal $A$-relative degree. 
(Roughly speaking, "more free" cells are attached to "less free" ones.)

\vspace{3mm}

\begin{lemma}
\label{t3l3}
The spectral sequence arising from the cube \rref{egeom**1} with $H\Z/2$ replaced by the complex
formed by our "cells" of $A$-relative degree $d$ collapses to $E^2$ concentrated in filtration degree
$d-rank(G/A)$. 
\end{lemma}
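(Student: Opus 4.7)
The plan is to analyze the spectral sequence cell by cell, using the equivariant transfer (Wirthm\"uller) isomorphism to reduce each cell to a smaller-group computation, and then invoking Lemma~\ref{t3l2}(b). By additivity of the spectral sequence in its input spectrum, it suffices to prove the statement for a single cell of $A$-relative degree $d$, which we write as $c=G/H_+\wedge \Sigma^{V_q}H\Z/2$ with $\rank(G/HA)=d$.

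First I would analyze each vertex of the cube \rref{egeom**1} applied to $c$ using the identification $(G/H_+\wedge Y)^G\simeq Y^H$ valid for any $G$-spectrum $Y$. Applied to the vertex $gr_{A'}(S^0)\wedge c$, this shows that $(gr_{A'}(S^0)\wedge c)^G\simeq *$ unless $A'\subseteq H$ (because no subgroup of $H$ then contains $A'$, so $\widetilde{E\mathcal{F}[A']}|_H\simeq *$); while for $A'\subseteq H$, the $G$-fixed points of this vertex reduce to $(gr_{A'}^{(H)}(S^0)\wedge \Sigma^{V_q|_H}H\Z/2_H)^H$, the corresponding vertex of the analogous $H$-equivariant cube applied to $\Sigma^{V_q|_H}H\Z/2_H$. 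This identifies the $G$-equivariant cube spectral sequence on $c$ with the $H$-equivariant cube spectral sequence on $\Sigma^{V_q|_H}H\Z/2_H$, up to the natural re-indexing by orbit type.

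Next I would apply Lemma~\ref{t3l2}(b) to the $H$-equivariant side. Since $m_{\alpha_i}<0$ for $i\leq q$, the restricted representation $V_q|_H=\sum_{i\leq q}m_{\alpha_i}(\alpha_i|_H)$ lies in the range of that Lemma. The key combinatorial identity is that the rank $m$ appearing there equals $\rank(G/A)-d$: the set $\{\alpha_1,\dots,\alpha_q\}$ spans $\mathrm{Ann}(A)\subseteq G^*$, and its image under the projection $G^*\twoheadrightarrow H^*=G^*/\mathrm{Ann}(H)$ has rank
$$\rank(\mathrm{Ann}(A))-\rank(\mathrm{Ann}(A)\cap \mathrm{Ann}(H))=\rank(G/A)-\rank(G/HA)=\rank(G/A)-d.$$
Lemma~\ref{t3l2}(b) then forces collapse of the $H$-spectral sequence to $E^2$ concentrated in the single $H$-filtration degree $-m=d-\rank(G/A)$.

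The main obstacle is managing the re-indexing of filtration degrees from the $G$-cube to the $H$-cube uniformly across cells with different orbit types $G/H$ (and hence different values of $\rank(G/H)$): the natural filtration on the associated graded $X_d$ compatible with the $A$-relative degree filtration shifts each cell's contribution by its orbit rank, so that the resulting collapse filtration degree $d-\rank(G/A)$ is uniform across all cells of $A$-relative degree $d$. Once this bookkeeping is set up, the conclusion follows directly from the two identifications above and Lemma~\ref{t3l2}(b).
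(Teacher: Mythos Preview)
Your reduction to a single cell is the gap. The claim ``by additivity of the spectral sequence in its input spectrum, it suffices to prove the statement for a single cell'' is not justified: the complex of cells of $A$-relative degree $d$ is built by nontrivial attaching maps, and neither $E^1$ nor $E^2$ is additive along cofiber sequences. In fact your own correct single-cell computation shows why this fails. For a cell $c=G/H_+\wedge\Sigma^{V_q}H\Z/2$ of $A$-relative degree $d$, your Wirthm\"uller identification and Lemma~\ref{t3l2}(b) give collapse of the $H$-cube in $H$-filtration degree $d-\rank(G/A)$; translating back to the $G$-cube shifts by $-\rank(G/H)=-\deg(c)$, so the $G$-cube for the single cell collapses in filtration degree
\[
d-\rank(G/A)-\deg(c).
\]
This depends on $\deg(c)$, and cells of the same $A$-relative degree $d$ do have different values of $\deg(c)$ (for instance, with $n=3$ and $\rank A=1$, the $d=0$ stratum contains both the cell with $H=G$, of degree $0$, and cells with $H$ a hyperplane not containing $A$, of degree $1$). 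So there is no ``bookkeeping'' shift that makes the single-cell answers line up; the discrepancy is genuine, and the wedge of single-cell $E^2$'s is spread over several filtration degrees.

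What the paper does instead is exactly to exploit the attaching maps you discarded. Within $A$-relative degree $d$ one climbs through the degree filtration: at each step one has a cofibration whose three terms give a short exact sequence of $E^1$-complexes of the form \rref{et3l22a}, with the first two terms already known (by the induction hypothesis and the smaller-group case) to collapse in filtration degrees $d-\rank(G/A)-1$ and $d-\rank(G/A)$. The resulting long exact sequence in $E^2$ then forces the third term into filtration degree $d-\rank(G/A)$, via \rref{et3l22b}. Your Wirthm\"uller/Lemma~\ref{t3l2}(b) step is precisely the input needed to handle the newly attached piece at each stage, but it cannot replace the inductive use of the cofiber sequences.
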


\begin{proof}
Within a given $A$-relative degree $d$, attaching cells of each consecutive degree results in a short exact sequence of the form \rref{et3l22a} where the first two terms collapse to $E^2$ in filtration degree $d-rank(G/A)-1$ and
$d-rank(G/A)$, respectively. Thus, there results a short exact sequence of the form 
\rref{et3l22b} in filtration degree $d-rank(G/A)$, as claimed.
\end{proof}

\vspace{3mm}
\noindent
{\em (The rest of) the proof of Theorem \ref{t3}:}
Filtering cells of \rref{et3p*} by $A$-relative degree, we obtain a spectral sequence $\mathcal{E}$ converging
to $E^2$ of the spectral sequence of the cube \rref{egeom**1} for \rref{et3p*}. By Lemma \ref{t3l3},
all the terms will be of the same \rref{egeom**1}-filtration degree $-rank(G/A)$, which is the complementary
degree of $\mathcal{E}$. (Note that in this discussion, we completely ignore the original topological degree.)
Thus, being concentrated in one complementary degree, $\mathcal{E}$ collapses to $E^2$ in that complementary
degree.

However, by precisely the same arguments, we can write a variant $\widetilde{\mathcal{E}}$
of the spectral sequence $\mathcal{E}$ in homotopy groups (rather than \rref{egeom**1} $E^1$-terms) 
of the filtered pieces of 
\rref{et3p*} by $A$-relative degree. By Lemma \ref{t3l3}, $\widetilde{\mathcal{E}}^1\cong \mathcal{E}^1$,
and $d_{\widetilde{\mathcal{E}}}^{1}$, $d_{\mathcal{E}}^{1}$ have the same rank (since they are computed by
the same formula). It follows that $\widetilde{\mathcal{E}}^2\cong \mathcal{E}^2$, both collapsing to a single
complementary degree. Therefore, it follows that $E^2$ (of the spectral sequence associated with \rref{egeom**1} for
\rref{et3p*}) is isomorphic to the homotopy of \rref{et3p*}, and hence the spectral sequence collapses to $E^2$
by a counting argument.
\qed

\vspace{3mm}
\section{Example: $n=2$}\label{s6}

In the case $n=2$, there are only three sign representations $\alpha$, $\beta$, $\gamma$ which play a symmetrical
role and satisfy
\beg{eex+}{\alpha+\beta+\gamma=0\in G^*,
}
which means that the Poincare series of the homotopy 
\beg{eex*}{\pi_*(\Sigma^{k\alpha+\ell\beta+m\gamma}H\Z/2) 
}
can be written down explicitly.

First recall that by Theorem \ref{t2}, for $k,\ell,m\geq 0$, the Poincare series is
\beg{eex1}{\frac{1}{(1-x)^2}(1+x-x^{1+k}-x^{1+\ell}-x^{1+m}+x^{2+k+\ell+m}).
}
If $k,\ell<0,m\leq 0$, by the proof of Lemma \ref{t3l2}, the formula \rref{eex1} is still valid when multiplied by
$x^{-2}$ (since all the homotopy classes are in filtration degree $-2$).

If $k,\ell<0,m>0$, in the proof of Theorem \ref{t3}, $A=0$, so the $A$-relative degree and the degree
coincide. Further, by \rref{eex+} and our formula for the differential $d^1$ of the spectral sequence of
\rref{egeom**1}, the differential $d^{1}_{\mathcal{E}}$ has maximal possible rank (i.e. "everything that can
cancel dimension-wise will"). We conclude that the $E^2$ is concentrated in filtration degrees $-1$ and $-2$. 
By the cancellation principle we just mentioned, the Poincare series can still be recovered from the formula
\rref{eex1}. If we write the expression \rref{eex1} as
\beg{eex**}{P_+(x)-P_-(x)
}
where $P_+(x)$ (resp. $-P_-(x)$) is the sum of monomial summands with a positive coefficient (resp. with a negative
coefficient) then the correct Poincare series in this case is
$$x^{-2}P_+(x)+x^{-1}P_-(x),$$
the two summands of which  represent classes in filtration degree $-2$ and $-1$, respectively. 

Similarly, one shows that if $k,\ell \geq 0$, $m<0$, the $E^2$ collapses to filtration degrees $0$ and $-1$,
and the Poincare series in this case is
$$P_+(x)+x^{-1}P_-(x).$$
All other cases are related to these by a symmetry of $(\Z/2)^2$.

\vspace{3mm}
\noindent
{\bf Remark:} It might  seem natural to conjecture that the classes of different filtration degrees in $E^2$ may be
of different dimensions, with a gap between them (evoking the "gap condition" which was proved for $\Z/2$ in
\cite{hk}, and made famous for the group $\Z/8$ by the Hill-Hopkins-Ravenel \cite{hhr} work on the Kervaire invariant $1$
problem). However, one easily sees that for $n\geq 3$, classes of different filtration degrees may occur in 
the same dimension. For example, by Lemma \ref{ltensor} and by what we just proved, such a situation always occurs
for $\pi_*\Sigma^{4\alpha+4\beta-2\gamma+4\delta}H\Z/2$ where $\alpha, \beta,\gamma$ are the three
sign representations of $\Z/2\times\Z/2\times \Z/2$ factoring through the projections to the first two copies of
$\Z/2$,
and $\delta$ is the sign representation which factors through the projection onto the last $\Z/2$.
  
\vspace{10mm}

\end{document}